\documentclass[12pt, a4paper]{article}
\usepackage{latexsym,amssymb,amsfonts,amsmath,epsfig,tabularx}

\usepackage{graphics}

\usepackage[hidelinks=true]{hyperref}

\usepackage{verbatim}

\usepackage{enumitem} 

\usepackage[usenames]{xcolor}
\newif\ifmarkauthors
\markauthorstrue
\ifmarkauthors
  \definecolor{darkred}{RGB}{139,0,0}
  \definecolor{darkgreen}{RGB}{0,100,0}
  \definecolor{darkmagenta}{RGB}{139,0,139}
  \definecolor{darkorange}{RGB}{190,70,20}

  \def\cbdelete[#1]{}
\else

  \def\cbdelete[#1]{}
\fi


\setlength{\textwidth}{17cm}
\setlength{\textheight}{24cm}
\setlength{\hoffset}{-0.5cm}
\setlength{\voffset}{-2cm}
\setlength{\oddsidemargin}{0in}
\setlength{\evensidemargin}{0in}
\newtheorem{theorem}{Theorem}
\newtheorem{remark}[theorem]{Remark}

\newtheorem{proposition}[theorem]{Proposition}

\newtheorem{example}[theorem]{Example}

\def\rho{\varrho}

\newcommand{\esup}{\operatornamewithlimits{ess\,sup}}
\newcommand{\einf}{\operatornamewithlimits{ess\,inf}}

\newenvironment{proof}{\begin{trivlist}\item[\hskip\labelsep{\bf Proof.}]}{$\hfill\Box$\end{trivlist}}

\newcommand{\rd}{{\mathrm{d}}}

\newcommand{\calE}{{\mathcal{E}}}

\newcommand{\expect}{{\mathbb{E}}}
\newcommand{\bbR}{{\mathbb{R}}}
\newcommand{\one}{{\boldsymbol{1}}}
\newcommand{\mask}[1]{}

\title{On alternative quantization for doubly weighted\\ 
approximation and integration over unbounded domains}

\author
{P. Kritzer\thanks{P. Kritzer is supported by the Austrian
Science Fund (FWF): Project F5506-N26, which is a part of the Special Research Program
"Quasi-Monte Carlo Methods: Theory and Applications".}, 
F. Pillichshammer\thanks{F. Pillichshammer is supported by the Austrian Science Fund (FWF): 
Project F5509-N26, which is a part of the Special Research Program 
"Quasi-Monte Carlo Methods: Theory and Applications".}, 
L. Plaskota\thanks{L. Plaskota is supported by the National Science Centre, Poland:  
Project 2017/25/B/ST1/00945.}, 
G. W. Wasilkowski}
\date{\today}

\begin{document}
\maketitle

\begin{abstract}
It is known that for a $\rho$-weighted $L_q$-approximation of single variable functions $f$ 
with the $r$th derivatives in a $\psi$-weighted $L_p$ space, the minimal error of approximations 
that use $n$ samples of $f$ is proportional to 
$\|\omega^{1/\alpha}\|_{L_1}^\alpha\|f^{(r)}\psi\|_{L_p}n^{-r+(1/p-1/q)_+},$ where 
$\omega=\rho/\psi$ and $\alpha=r-1/p+1/q.$ Moreover, the optimal sample points are 
determined by quantiles of $\omega^{1/\alpha}.$ In this paper, we show how the error of best 
approximations 
changes when the sample points are determined by a quantizer $\kappa$ other 
than $\omega.$ Our results can be applied in situations when an alternative quantizer  has to be 
used because $\omega$ is not known exactly or is too complicated to handle computationally.
The results for $q=1$ are also applicable to $\rho$-weighted integration over unbounded domains. 
\end{abstract}

\centerline{\begin{minipage}[hc]{130mm}{
{\em Keywords:} quantization, weighted approximation, weighted integration, unbounded domains, piecewise Taylor approximation\\
{\em MSC 2010:} 41A25, 41A55, 41A60}
\end{minipage}}

\section{Introduction}

In various applications, continuous objects (signals, images, etc.) are represented (or approximated) by 
their discrete counterparts. That is, we deal with \emph{quantization}. From a pure mathematics point of view, 
quantization often leads to approximating functions from a given space by step functions or, 
more generally, by (quasi-)interpolating piecewise polynomials of certain degree. 
Then it is important to know which quantizer should be used, or how to select $n$ break points (knots) to make 
the error of approximation as small as possible.  

It is well known that for $L_q$ approximation on 
a compact interval $D=[a,b]$ in the space $F^r_p(D)$ of real-valued functions $f$ such that $f^{(r)}\in L_p(D),$ 
the choice of an optimal quantizer is not a big issue, since equidistant knots lead to approximations with optimal 
$L_q$ error
\begin{equation}\label{noweights}
    c\,(b-a)^\alpha \|f^{(r)}\|_{L_q}n^{-r+(1/p-1/q)_+}\qquad\quad
    \mbox{with}\quad  \alpha:=r-\frac1p+\frac1q,
\end{equation}
where $c$ depends only on $r$, $p$, and $q$, and where $x_+:=\max(x,0)$.
The problem becomes more complicated if we switch to weighted approximation on unbounded domains. 
A generalization of \eqref{noweights} to this case was given in \cite{KuPlWa16}, and it reads as follows. 
Assume for simplicity that the domain $D=\mathbb R_+:=[0,+\infty).$ Let $\psi,\rho:D\to(0,+\infty)$ be two 
positive and integrable \emph{weight} functions. For a positive integer $r$ and $1\le p,q\le+\infty,$ consider 
the $\rho$-weighted $L_q$ approximation in the linear space $F^r_{p,\psi}(D)$ of functions $f:D\to\mathbb R$ 
with absolutely (locally) continuous $(r-1)$st derivative and such that the $\psi$-weighted $L_p$ norm 
of $f^{(r)}$ is finite, i.e., $\|f^{(r)}\psi\|_{L_p}<+\infty.$
Note that the spaces $F^r_{p,\psi}(D)$ have been introduced in \cite{WaWo2000}, and 
the role of $\psi$ is to 
moderate their size. 

Denote 
\begin{equation}\label{omeg}
 \omega:=\frac\rho\psi, 
\end{equation}
and suppose that $\omega$ and $\psi$ are nonincreasing on $D,$ and that 
\begin{equation}\label{omega}
     \|\omega^{1/\alpha}\|_{L_1}:=\int_D\omega^{1/\alpha}(x)\,\rd x<+\infty.
\end{equation}
It was shown in \cite[Theorem~1]{KuPlWa16} that then one can construct approximations using $n$ knots with 
$\rho$-weighted $L_q$ error at most
$$ c_1\,\|\omega^{1/\alpha}\|_{L_1}^\alpha\,\|f^{(r)}\psi\|_{L_p}
n^{-r+(1/p-1/q)_+}.
$$
This means that if \eqref{omega} holds true, then the upper bound on the worst-case error is proportional to 
$\|\omega^{1/\alpha}\|_{L_1}^\alpha\,n^{-r+(1/p-1/q)_+}$. The convergence rate $n^{-r+(1/p-1/q)_+}$ 
is optimal and a corresponding lower bound implies that  if \eqref{omega} is not satisfied then 
the rate $n^{-r+(1/p-1/q)_+}$ cannot be reached (see \cite[Theorem~3]{KuPlWa16}).

The optimal knots
$$0=x_0^*<x_1^*<\ldots<x_{n-1}^*<x_n^*=+\infty
$$
are determined by quantiles of $\omega^{1/\alpha},$ to be more precise,
\begin{equation}\label{knots} 
  \int_0^{x_i^*}\omega^{1/\alpha}(t)\,\rd t=\frac in\,\|\omega^{1/\alpha}\|_{L_1}. 
\end{equation}
In order to use the optimal quantizer \eqref{knots} one has to know $\omega$; otherwise
he has to rely on some approximations of $\omega.$ Moreover, even if $\omega$ is known, 
it may be a complicated and/or non-monotonic function and therefore difficult to handle computationally. 
Driven by this motivation, the purpose of the present paper is to generalize the results of \cite{KuPlWa16} 
even further to see how the quality of best approximations will change if the optimal quantizer $\omega$ 
is replaced in \eqref{knots} by another quantizer $\kappa.$ 

A general answer to the aforementioned question is given in Theorems \ref{main1} and \ref{main2} of 
Section \ref{sect:second}. They show, respectively, tight (up to a constant) upper and lower bounds for the error when 
a quantizer $\kappa$ with $\|\kappa^{1/\alpha}\|_{L_1}<+\infty$ instead of $\omega$ is used to determine the knots. 
To be more specific, define
\begin{equation}\label{Edef1} \mathcal E_p^q(\omega,\kappa)\,=\,
\left\|\frac{\omega}{\kappa}\right\|_{L_\infty}\qquad\mbox{for $p\le q$},
\end{equation}
and 
\begin{equation}\label{Edef2} \mathcal E_p^q(\omega,\kappa)\,=\,\left(\int_D
\frac{\kappa^{1/\alpha}(x)}{\|\kappa^{1/\alpha}\|_{L_1}}
\bigg(\frac{\omega(x)}{\kappa(x)}\bigg)^{\frac{1}{1/q-1/p}}
\rd x\right)^{1/q-1/p}\qquad\mbox{for $p \ge q$.}
\end{equation} 
(Note that \eqref{Edef1} and \eqref{Edef2} are consistent for $p=q$.)
If $\mathcal E_p^q(\omega,\kappa)<+\infty$ then the best achievable error is proportional to 
$$ \|\kappa^{1/\alpha}\|_{L_1}^\alpha\,\mathcal E_p^q(\omega,\kappa)\,\|f^{(r)}\psi\|_{L_p}n^{-r+(1/p-1/q)_+}.$$
This means, in particular, that for the error to behave as $n^{-r+(1/p-1/q)_+}$ it is sufficient (but not necessary) that $\kappa(x)$ 
decreases no faster than $\omega(x)$ as $|x|\to+\infty.$ 
For instance, if the optimal quantizer is Gaussian, $\omega(x)=\exp(-x^2/2),$ 
then the optimal rate is still preserved if its exponential substitute $\kappa(x)=\exp(-a|x|)$ with arbitrary $a>0$ is used. 
It also shows that, in case $\omega$ is not exactly known, it is much safer to overestimate than underestimate it, 
see also Example~\ref{example1}.

The use of a quantizer $\kappa$ as above results in approximations that are worse than the optimal approximations by the factor of 
$$ \mathrm{FCTR}(p,q,\omega,\kappa)\,=\,\frac{\|\kappa^{1/\alpha}\|_{L_1}^\alpha}{\|\omega^{1/\alpha}\|_{L_1}^\alpha}
    \,\mathcal E_p^q(\omega,\kappa)\,\ge\,1. $$
In Section \ref{sec:third}, we calculate the exact values of this factor for various combinations of weights 
$\rho,$ $\psi$, and $\kappa$, including: Gaussian, exponential, log-normal, logistic, and $t$-Student. 
It turns out that in many cases $\mathrm{FCTR}(p,q,\omega,\kappa)$ 
is quite small, so that the loss in accuracy of approximation is well compensated by simplification of the weights.

The results for $q=1$ are also applicable for problems of
approximating $\rho$-weighted integrals 
\[\int_D f(x)\,\rho(x)\,\rd x\quad\mbox{for}\quad f\,\in\,
F^r_{p,\psi}(D).
\]  
More precisely, the worst case errors of quadratures that are integrals of the
corresponding piecewise interpolation polynomials approximating
functions $f\in F^r_{p,\psi}(D)$ are the same as the errors
for the $\rho$-weighted $L_1(D)$ approximations. Hence 
their errors, proportional to $n^{-r}$, are (modulo a constant) the best
possible among all quadratures. 
These results are especially important for unbounded
domains, e.g., $D=\bbR_+$ or $D=\bbR$.
For such domains, the integrals are often
approximated by Gauss-Laguerre rules and Gauss-Hermite rules, respectively, 
see, e.g., \cite{DaRa1984,Hil1954,StSe1966}; however,
their efficiency requires smooth integrands and the results are
asymptotic. Moreover, it is not clear which Gaussian rules should be used
when $\psi$ is not a constant function.
But, even for $\psi\equiv1$, it is likely that the worst case errors
(with respect to $F^r_{p,\psi}$) of Gaussian rules are much larger
than $O(n^{-r})$, since the Weierstrass theorem holds only for compact $D$. 
A very interesting extension of Gaussian rules to functions with
singularities has been proposed in \cite{GrOe14}. However, 
the results of \cite{GrOe14} are also asymptotic and it is not
clear how 
the proposed rules  behave for functions
from spaces $F^r_{p,\psi}$. 
In the present paper,
we deal with functions of bounded smoothness ($r<+\infty$) and 
provide worst-case error bounds that are minimal. 
We stress here that the regularity degree $r$ is a fixed
but arbitrary positive integer. 
The paper \cite{KrPiPlWa2018}
proposes a different approach to the weighted integration over
unbounded domains; however, it is restricted to regularity $r=1$ only.

The paper is organized as follows. In the following section, we present 
ideas and results  
about alternative quantizers. The main results are Theorems~\ref{main1} and \ref{main2}. 
In Section~\ref{sec:third}, we apply our results to some specific cases 
for which 
numerical 
values 
of ${\rm FCTR}(p,q,\omega,\kappa)$ are calculated.  

\section{Optimal versus alternative quantizers}\label{sect:second}

We consider $\rho$-weighted $L_q$ approximation in the space $F_{p,\psi}^r(D)$ as defined in the introduction; however, 
in contrast to \cite{KuPlWa16}, we do not assume that the weights $\psi$ and $\omega$  are nonincreasing. Although the results of this paper pertain to domains $D$ being 
an arbitrary interval, to begin with we assume that 
$$D=\mathbb R_+.$$ 
We will explain later what happens in the general case including $D=\mathbb R.$ 

\smallskip
Let the knots $0=x_0<\ldots<x_n=+\infty$ be determined by a nonincreasing 
function (quantizer) $\kappa:D\to(0,+\infty)$ satisfying $\|\kappa^{1/\alpha}\|_{L_1}<+\infty,$ i.e.,
\begin{equation}\label{knots1}
   \int_0^{x_i}\kappa^{1/\alpha}(t)\,\rd t=\frac in\,\|\kappa^{1/\alpha}\|_{L_1}\quad\mbox{with}\quad \alpha\,=\,r-\frac1p+\frac1q. 
\end{equation}

Let $\mathcal T_nf$ be a piecewise Taylor approximation of $f\in F^r_{p,\psi}(D)$ with break-points \eqref{knots1},  
$$\mathcal T_nf(x) = \sum_{i=1}^{n}\one_{[x_{i-1},x_i)}(x)\sum_{k=0}^{r-1}\frac{f^{(k)}(x_{i-1})}{k!}(x-x_{i-1})^k.$$

We remind the reader of the definition of the quantity $\mathcal E_p^q(\omega,\kappa)$ in \eqref{Edef1} and \eqref{Edef2}, which will be of importance 
in the following theorem.

\begin{theorem}\label{main1}
Suppose that 
$$  \mathcal E_p^q(\omega,\kappa)<+\infty. $$ 
Then for every $f\in F_{p,\psi}^q(D)$ we have 
\begin{equation}\label{eqthm}
     \|(f-\mathcal T_nf)\rho\|_{L_q}\,\le\,c_1\,\|\kappa^{1/\alpha}\|_{L_1}^{{\alpha}}
     \,\mathcal E_p^q(\omega,\kappa)\,\|f^{(r)}\psi\|_{L_p}\,n^{-r+(1/p-1/q)_+},
\end{equation}
where $$  c_1\,=\,\frac1{(r-1)!\,((r-1)p^*+1)^{1/p^*}}. $$ 
\end{theorem}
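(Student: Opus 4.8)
The plan is to reduce the global error to a sum of local errors over the intervals $[x_{i-1},x_i)$ defined by the quantizer $\kappa$, bound each local error by a standard univariate Taylor-remainder estimate weighted by $\rho$, and then reassemble the pieces using the defining property \eqref{knots1} of the knots together with the quantity $\mathcal E_p^q(\omega,\kappa)$ to absorb the mismatch between $\kappa$ and the true optimal quantizer $\omega$. The key observation is that on each subinterval $I_i=[x_{i-1},x_i)$ the approximant $\mathcal T_nf$ is just the degree-$(r-1)$ Taylor polynomial of $f$ at the left endpoint, so the pointwise remainder has the exact integral representation
\begin{equation*}
  f(x)-\mathcal T_nf(x)=\frac{1}{(r-1)!}\int_{x_{i-1}}^{x}(x-t)^{r-1}f^{(r)}(t)\,\rd t\qquad(x\in I_i).
\end{equation*}

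First I would estimate, for a single interval $I_i$, the local $\rho$-weighted $L_q$ norm $\|(f-\mathcal T_nf)\rho\|_{L_q(I_i)}$. Writing $f^{(r)}=(f^{(r)}\psi)\psi^{-1}$ and applying Hölder's inequality inside the Taylor remainder (with the conjugate exponent $p^*$ of $p$, which is where the factor $((r-1)p^*+1)^{1/p^*}$ and the $(r-1)!$ in $c_1$ originate), I expect a bound of the form
\begin{equation*}
  \|(f-\mathcal T_nf)\rho\|_{L_q(I_i)}\le c_1\,\|f^{(r)}\psi\|_{L_p(I_i)}\,\Theta_i,
\end{equation*}
where $\Theta_i$ is a purely geometric quantity depending on the interval length $|I_i|$, the local behaviour of $\rho=\omega\psi$, and the exponents. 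The crux is to express $\Theta_i$ so that summing over $i$ produces the global factor; here the relation $\alpha=r-1/p+1/q$ is exactly what makes the length scaling compatible. Because the knots satisfy $\int_{x_{i-1}}^{x_i}\kappa^{1/\alpha}\,\rd t=n^{-1}\|\kappa^{1/\alpha}\|_{L_1}$, each interval carries an equal $\kappa^{1/\alpha}$-mass, which controls $|I_i|$ and lets me replace $\omega$ by $\kappa$ up to the factor $\omega/\kappa$.

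Next I would combine the local estimates. For $p\le q$ the reassembly uses the monotone/embedding inequality $\ell_q\hookrightarrow\ell_p$ among the sequence $(\|f^{(r)}\psi\|_{L_p(I_i)})_i$, and the ratio $\omega/\kappa$ enters only through its supremum, giving the $\|\omega/\kappa\|_{L_\infty}$ appearing in \eqref{Edef1}. For $p\ge q$ the opposite embedding and a Hölder step in the index $i$ produce instead the integrated expression \eqref{Edef2}, where the exponent $1/(1/q-1/p)$ is dictated by conjugacy of the summation exponents. In both regimes the equal-mass property collapses the $n$ intervals into a single power $n^{-r+(1/p-1/q)_+}$ times $\|\kappa^{1/\alpha}\|_{L_1}^\alpha$.

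The main obstacle I anticipate is making the local-to-global passage fully rigorous in the two regimes simultaneously: the geometric quantity $\Theta_i$ must be algebraically manipulated so that the $\kappa^{1/\alpha}$-equidistribution cleanly factors out as $\|\kappa^{1/\alpha}\|_{L_1}^\alpha n^{-\alpha}$ while the residual $\omega/\kappa$ terms aggregate into exactly $\mathcal E_p^q(\omega,\kappa)$. Care is needed because $\kappa$ (and hence $|I_i|$) is defined only implicitly, and without monotonicity of $\omega$ one cannot use pointwise comparisons; the argument must work with the averaged $\kappa^{1/\alpha}$-mass rather than with values of $\omega$ at the knots. I would handle this by bounding $\int_{I_i}\omega^{q/\cdots}\,\rd x$ against $(\omega/\kappa)$-weighted versions of the $\kappa^{1/\alpha}$-mass on $I_i$, then applying the appropriate discrete Hölder or embedding inequality; verifying that the endpoint case $p=q$ (where \eqref{Edef1} and \eqref{Edef2} agree) and the extreme cases $p,q\in\{1,\infty\}$ all emerge with the stated constant $c_1$ is the delicate bookkeeping that completes the proof.
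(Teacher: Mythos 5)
Your proposal follows essentially the same route as the paper's proof: the piecewise Taylor remainder plus a local H\"older step with exponent $p^*$ (yielding exactly the constant $c_1$), the equal $\kappa^{1/\alpha}$-mass property of the knots from \eqref{knots1} (together with the assumed monotonicity of $\kappa$) to extract the factor $\|\kappa^{1/\alpha}\|_{L_1}^{\alpha}n^{-\alpha}$, and a case split in which the aggregation over intervals is handled by the $\ell_{p/q}$-embedding for $p\le q$ and by discrete H\"older plus Jensen's inequality against the normalized density $n\,\kappa^{1/\alpha}/\|\kappa^{1/\alpha}\|_{L_1}$ for $p>q$. The paper packages this last aggregation as an explicit constrained maximization (solved via Jensen's inequality and Lagrange multipliers), which is precisely the dual formulation of the embedding/H\"older steps you describe, so the two arguments coincide.
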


\begin{proof}
We proceed as in the proof of \cite[Theorem~1]{KuPlWa16} to get that for $x\in[x_{i-1},x_i)$ 
\begin{eqnarray*}
     \rho(x)|f(x)-\mathcal T_nf(x)|  &=& \rho(x)\bigg|\int_{x_{i-1}}^{x_i}f^{(r)}(t)\frac{(x-t)_+^{r-1}}{(r-1)!}\rd t\bigg| \\
      &\le & c_1\,\frac{\omega(x)}{\kappa(x)}\,
       \bigg(\int_{x_{i-1}}^{x_i}|f^{(r)}(t)\psi(t)|^p\rd t\bigg)^{1/p}\kappa(x)(x-x_{i-1})^{r-1/p}. 
\end{eqnarray*}
Since (cf. \cite[p.36]{KuPlWa16})
$$ \kappa(x)(x-x_i)^{r-1/p}\le 
    (\kappa^{1/\alpha}(x))^{1/q}\bigg(\frac{\|\kappa^{1/\alpha}\|_{L_1}}{n}\bigg)^{r-1/p}, $$
the error is upper bounded as follows:
\begin{align}\label{est1}
  \lefteqn{ \|(f-\mathcal T_nf)\rho\|_{L_q} =
  \bigg(\sum_{i=1}^{n}\int_{x_{i-1}}^{x_i}\rho^q(x)|f(x)-\mathcal T_nf(x)|^q\rd x\bigg)^{1/q}  }  \nonumber\\
 & \le c_1\bigg(\frac{\|\kappa^{1/\alpha}\|_{L_1}}{n}\bigg)^{r-1/p}\left(\sum_{i=1}^{n}
     \bigg(\int_{x_{i-1}}^{x_i}\kappa^{1/\alpha}(x)\bigg(\frac{\omega(x)}{\kappa(x)}\bigg)^q\rd x\bigg)
     \bigg(\int_{x_{i-1}}^{x_i}|f^{(r)}(t)\psi(t)|^p\rd t\bigg)^{q/p}\right)^{1/q}.
\end{align}
Now we maximize the right hand side of \eqref{est1} subject to 
$$\|f^{(r)}\psi\|_{L_p}^p=\sum_{i=1}^n\int_{x_{i-1}}^{x_i}|f^{(r)}(t)\psi(t)|^p\rd t=1.$$
After the substitution
$$ A_i := \int_{x_{i-1}}^{x_i}\kappa^{1/\alpha}(x)\bigg(\frac{\omega(x)}{\kappa(x)}\bigg)^q\rd x,
 \qquad B_i := \bigg(\int_{x_{i-1}}^{x_i}|f^{(r)}(t)\psi(t)|^p\rd t\bigg)^{q/p},
$$
this is equivalent to $$\mbox{maximizing\quad
$\sum_{i=1}^{n}A_iB_i$\quad subject to\quad $\sum_{i=1}^{n}B_i^{p/q}=1$.}$$
We have two cases:

For $p\le q$, we set  $i^*=\mathrm{arg}\max_{1\le i\le n}A_i,$ and use Jensen's inequality to obtain $$\sum_{i=1}^{n}A_iB_i \le A_{i^*} \sum_{i=1}^{n}B_i \le A_{i^*} \left( \sum_{i=1}^{n}B_i^{p/q}\right)^{q/p}=A_{i^*}.$$ Hence the maximum equals $A_{i^*}$ and it is attained  at $B_i^*=1$ for $i=i^*,$ and $B_i^*=0$ otherwise. In this case, the maximum is upper bounded by 
$\|\omega/\kappa\|_{L_\infty}^q\|\kappa^{1/\alpha}\|_{L_1}/n,$ which means that 
$$ \|(f-\mathcal T_nf)\rho\|_{L_q}\,\le\, c_1\bigg(\frac{\|\kappa^{1/\alpha}\|_{L_1}}{n}\bigg)^\alpha
      \left\|\frac\omega\kappa\right\|_{L_\infty}\|f^{(r)}\psi\|_{L_p}. $$

For $p>q$ we use the method of Lagrange multipliers and find this way that the maximum equals 
$$ \left(\sum_{i=1}^{n}A_i^\frac{1}{1-q/p}\right)^{1-q/p}\,=\,
    \left( \sum_{i=1}^{n}\bigg(\int_{x_{i-1}}^{x_i}\kappa^{1/\alpha}(x)
      \left(\frac{\omega(x)}{\kappa(x)}\right)^q\rd x\bigg)^\frac{1}{1-q/p} \right)^{1-q/p}, $$
and is attained at 
$$  B_i^*=\left(\frac{A_i^\frac{1}{1-q/p}}{\sum_{j=1}^n A_j^\frac{1}{1-q/p}}\right)^{q/p},\qquad 1\le i\le n. $$
Since $1/(1-q/p)>1,$ by the probabilistic version of Jensen's inequality with density 
$n\,\kappa^{1/\alpha}/\|\kappa^{1/\alpha}\|_{L_1},$ we have 
$$ \bigg(\int_{x_{i-1}}^{x_i}\kappa^{1/\alpha}(x)\left(\frac{\omega(x)}{\kappa(x)}\right)^q\rd x\bigg)^\frac{1}{1-q/p} 
     \le \bigg(\frac{\|\kappa^{1/\alpha}\|_{L_1}}{n}\bigg)^\frac{1}{p/q-1}
     \int_{x_{i-1}}^{x_i}\kappa^{1/\alpha}(x)\left(\frac{\omega(x)}{\kappa(x)}\right)^\frac{1}{1/q-1/p}\rd x. $$
This implies that
$$ \left(\sum_{i=1}^{n}A_i^\frac{1}{1-q/p}\right)^{1-q/p}\,\le\,
     \bigg(\frac{\|\kappa^{1/\alpha}\|_{L_1}}{n}\bigg)^{q/p}
   \left(\int_0^{+\infty}\kappa^{1/\alpha}(x)\left(\frac{\omega(x)}{\kappa(x)}\right)^\frac{1}{1/q-1/p}\rd x\right)^{1-q/p},
$$
and finally
$$ \|(f-\mathcal T_nf)\rho\|_{L_q}\,\le\,c_1\bigg(\frac{\|\kappa^{1/\alpha}\|_{L_1}}{n}\bigg)^r
      \left(\int_0^{+\infty}\kappa^{1/\alpha}(x)\left(\frac{\omega(x)}{\kappa(x)}\right)^\frac{1}{1/q-1/p}
       \rd x\right)^{1/q-1/p}\|f^{(r)}\psi\|_{L_p}, 
$$ 
as claimed since $1/q-1/p=\alpha-r$.
\end{proof}

\begin{remark}\rm
If derivatives of $f$ are difficult to compute or to sample,
a piecewise Lagrange interpolation $\mathcal L_n$ can be used,
as in \cite{KuPlWa16}. Then the result is slightly weaker than that of the present
Theorem~\ref{main1};
namely (cf. \cite[Theorem~2]{KuPlWa16}), there exists $c_1'>0$ depending only on $p$, $q,$ and $r,$ such that
$$ \limsup_{n\to\infty}\sup_{f\in F_{p,\psi}^r(D)}\frac{\|(f-\mathcal L_nf)\rho\|_{L_q}}{\|f^{(r)}\psi\|_{L_p}}\,
        n^{r+(1/p-1/q)_+}\,\le\,c_1'\,\|\kappa^{1/\alpha}\|_{L_1}^{\alpha}\,\mathcal E_p^q(\omega,\kappa). $$
 
\end{remark}

\medskip
We now show that the error estimate of Theorem \ref{main1} cannot be improved.

\begin{theorem}\label{main2}
There exists $c_2>0$ depending only on $p,$ $q,$ and $r$ with the following property. For any approximation $\mathcal A_n$ 
that uses only information about function values and/or its derivatives (up to order $r-1$) at the knots $x_0,\ldots,x_n$ 
given by \eqref{knots1}, we have
\begin{equation}\label{eqth2}
  \liminf_{n\to\infty}\sup_{f\in F_{p,\psi}^r(D)}\frac{\|(f-\mathcal A_nf)\rho\|_{L_q}}{\|f^{(r)}\psi\|_{L_p}}\,
        n^{r-(1/p-1/q)_+}\,\ge\,c_2\,\|\kappa^{1/\alpha}\|_{L_1}^{\alpha}\mathcal E_p^q(\omega,\kappa). 
\end{equation}
\end{theorem}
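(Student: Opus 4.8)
The plan is to run the standard adversary (fooling--function) argument of information--based complexity, tailored to the knots \eqref{knots1}. First I would fix, once and for all, a reference bump $g\in F_p^r([0,1])$ whose derivatives up to order $r-1$ all vanish at $0$ and at $1$, and transplant it onto each subinterval: with $h_i:=x_i-x_{i-1}$ set $g_i(x):=g((x-x_{i-1})/h_i)$ on $[x_{i-1},x_i]$ and $g_i:=0$ elsewhere. By construction $g_i$ and all its derivatives up to order $r-1$ vanish at every knot $x_0,\dots,x_n$, so for every sign vector $\boldsymbol{s}\in\{-1,+1\}^n$ and all coefficients $a_i$ the function $f_{\boldsymbol s}:=\sum_{i=1}^n s_i\,a_i\,g_i$ carries the same information as the zero function; hence $\mathcal A_n f_{\boldsymbol s}=\mathcal A_n 0=:y$ for every $\boldsymbol s$. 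Since the $g_i$ have pairwise disjoint supports and $t\mapsto|t|^q$ is convex, averaging over $\boldsymbol s$ annihilates the cross terms and yields
$$ \sup_{\boldsymbol s}\big\|(f_{\boldsymbol s}-y)\rho\big\|_{L_q}^q\,\ge\,\frac1{2^n}\sum_{\boldsymbol s}\big\|(f_{\boldsymbol s}-y)\rho\big\|_{L_q}^q\,\ge\,\sum_{i=1}^n|a_i|^q\,\|g_i\rho\|_{L_q}^q . $$
Because $\mathcal A_n f_{\boldsymbol s}=y$ and, again by disjointness, $\|f_{\boldsymbol s}^{(r)}\psi\|_{L_p}^p=\sum_{i=1}^n|a_i|^p\|g_i^{(r)}\psi\|_{L_p}^p$, the whole lower bound reduces to the extremal problem of maximizing $\big(\sum_i|a_i|^q\|g_i\rho\|_{L_q}^q\big)^{1/q}$ subject to $\sum_i|a_i|^p\|g_i^{(r)}\psi\|_{L_p}^p=1$.

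Next I would evaluate the two norms by scaling. A change of variables together with continuity of $\rho$ and $\psi$ gives, as $n\to\infty$ and thus $h_i\to0$, the asymptotics $\|g_i\rho\|_{L_q}\sim\|g\|_{L_q}\,\rho(\xi_i)\,h_i^{1/q}$ and $\|g_i^{(r)}\psi\|_{L_p}\sim\|g^{(r)}\|_{L_p}\,\psi(\xi_i)\,h_i^{1/p-r}$ for suitable representative points $\xi_i\in[x_{i-1},x_i]$. Writing $w_i:=\|g_i\rho\|_{L_q}/\|g_i^{(r)}\psi\|_{L_p}$ and $C:=\|g\|_{L_q}/\|g^{(r)}\|_{L_p}$ this reads $w_i\sim C\,h_i^{\alpha}\,\omega(\xi_i)$ with $\alpha=r-1/p+1/q$. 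The defining relation \eqref{knots1} equidistributes the mass of $\kappa^{1/\alpha}$, i.e.\ $\kappa^{1/\alpha}(\xi_i)\,h_i\sim\|\kappa^{1/\alpha}\|_{L_1}/n$, so that $h_i^{\alpha}\sim(\|\kappa^{1/\alpha}\|_{L_1}/n)^{\alpha}\kappa(\xi_i)^{-1}$ and therefore
$$ w_i\,\sim\,C\,\bigg(\frac{\|\kappa^{1/\alpha}\|_{L_1}}{n}\bigg)^{\alpha}\,\frac{\omega(\xi_i)}{\kappa(\xi_i)} . $$

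Finally I would solve the extremal problem, which is precisely dual to the maximization already carried out in the proof of Theorem~\ref{main1}. Putting $b_i:=|a_i|\,\|g_i^{(r)}\psi\|_{L_p}$ it becomes: maximize $\big(\sum_i b_i^q w_i^q\big)^{1/q}$ over $\sum_i b_i^p=1$. For $p\le q$ the optimum concentrates on a single index and equals $\max_i w_i\sim C(\|\kappa^{1/\alpha}\|_{L_1}/n)^{\alpha}\|\omega/\kappa\|_{L_\infty}$, reproducing $\mathcal E_p^q$ from \eqref{Edef1} and the exponent $n^{-\alpha}=n^{-r+(1/p-1/q)_+}$. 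For $p>q$, Hölder's inequality with the conjugate exponents $p/q$ and $p/(p-q)$ shows the optimum is $\big(\sum_i w_i^{1/(1/q-1/p)}\big)^{1/q-1/p}$; inserting the asymptotics for $w_i$ and reading $\sum_i(\,\cdot\,)$ as a Riemann sum for the measure $\kappa^{1/\alpha}\,\rd x$ (once more through \eqref{knots1}) turns it into $\big(\int_D\kappa^{1/\alpha}(\omega/\kappa)^{1/(1/q-1/p)}\rd x\big)^{1/q-1/p}$, which after collecting the powers of $\|\kappa^{1/\alpha}\|_{L_1}$ and $n$ equals $\|\kappa^{1/\alpha}\|_{L_1}^{\alpha}\,\mathcal E_p^q(\omega,\kappa)\,n^{-r}$ with $\mathcal E_p^q$ as in \eqref{Edef2}. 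In both cases multiplying by $n^{r-(1/p-1/q)_+}$ and passing to the $\liminf$ delivers \eqref{eqth2} with a constant $c_2$ depending only on $p,q,r$ (through $C$, hence through the norms of the fixed bump $g$). The hard part will be the analytic bookkeeping in this last step: making the Riemann--sum convergence uniform as $n\to\infty$ over the unbounded domain, choosing the points $\xi_i$ consistently, controlling the terminal unbounded interval $[x_{n-1},+\infty)$, and verifying $\max_i h_i\to0$ from $\|\kappa^{1/\alpha}\|_{L_1}<+\infty$, so that all the asymptotic equivalences above survive inside the $\liminf$ without loss of the constant.
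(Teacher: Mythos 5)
Your overall strategy is the same as the paper's: fooling functions supported on the subintervals generated by \eqref{knots1}, vanishing together with all derivatives up to order $r-1$ at the knots, leading to exactly the extremal problem already solved in the proof of Theorem~\ref{main1}, with the knot equation used to convert the resulting sums into integrals. (The paper's abstract bumps $f_i$ satisfying \eqref{c2} are in practice obtained precisely by your rescaling of a fixed reference bump, and your sign-averaging step is the standard justification of the adversary argument the paper leaves implicit.) So the skeleton is fine; the genuine problem lies in the final step that you yourself flag as ``the hard part.''

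The specific claim you hope to verify there --- that $\max_i h_i\to 0$ follows from $\|\kappa^{1/\alpha}\|_{L_1}<+\infty$ --- is false, and with it the uniform local-constancy/Riemann-sum argument collapses on the tail intervals. Each interval carries equal $\kappa^{1/\alpha}$-mass $\|\kappa^{1/\alpha}\|_{L_1}/n$, while $\kappa^{1/\alpha}(x)\to 0$ as $x\to+\infty$, so the intervals near $x_{n-1}$ cannot be short. Concretely, for $\kappa(x)=\exp(-x)$ and $\alpha=1$ one gets $x_i=-\ln(1-i/n)$, hence $x_{n-1}-x_{n-2}=\ln 2$ for every $n$; for the polynomial quantizer $\kappa_a(x)=(1+|x|)^{-a}$ of the paper's Student case the length of the last bounded interval even diverges as $n\to\infty$. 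Consequently the equivalences $\|g_i\rho\|_{L_q}\sim\|g\|_{L_q}\,\rho(\xi_i)\,h_i^{1/q}$ and $\|g_i^{(r)}\psi\|_{L_p}\sim\|g^{(r)}\|_{L_p}\,\psi(\xi_i)\,h_i^{1/p-r}$ are unjustified for those indices $i$. The repair is standard and is essentially what the paper does: either (i) first prove the bound exactly for weights that are step functions on $[0,x_{n-1})$, where no asymptotics are needed, and then treat general weights by replacing $\rho$ with $\einf_{I_i}\rho$ and $\psi$ with $\esup_{I_i}\psi$ and letting $n\to\infty$; or (ii) localize your own argument: fix $M>0$, observe that every interval contained in $[0,M]$ has length at most $\|\kappa^{1/\alpha}\|_{L_1}/\bigl(n\,\kappa^{1/\alpha}(M)\bigr)\to 0$, run your asymptotics only on $[0,M]$ to obtain the integral (respectively the essential supremum) over $[0,M]$ in the lower bound, and let $M\to\infty$ only after $n\to\infty$; since \eqref{eqth2} is a liminf lower bound, discarding the tail merely weakens the estimate, and monotone convergence then recovers the full $\mathcal E_p^q(\omega,\kappa)$. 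A small related point: your family includes $g_n$ on $[x_{n-1},x_n)=[x_{n-1},+\infty)$, where $h_n=+\infty$ makes the rescaling meaningless; like the paper, you must work with the $n-1$ bounded intervals only.
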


\begin{proof}
We fix $n$ and consider first the weighted $L_q$ approximation on $[0,x_{n-1})$ assuming that in this interval the weights are step functions 
with break points $x_i$ given by \eqref{knots1}. Let 
$\psi_i,$ $\rho_i,$ $\omega_i=\rho_i/\psi_i,$ and $\kappa_i$ 
be correspondingly the values of 
$\psi,$ $\rho,$ $\omega,$ and $\kappa$ on successive intervals $[x_{i-1},x_i).$ Then we clearly have that
$(x_i-x_{i-1})\kappa_i^{1/\alpha}=\|\kappa^{1/\alpha}\|_{L_1(0,x_{n-1})}/(n-1).$ 

For simplicity, we write $I_i:=(x_{i-1},x_i).$ Let $f_i,$ $1\le i\le n-1,$ be functions supported on $I_i,$ such that 
$f_i^{(j)}(x_{i-1})=0=f_i^{(j)}(x_i)$ for $0\le j\le r-1,$ and 
\begin{equation}\label{c2}
    \|f_i\|_{L_q(I_i)}\ge c_2(x_i-x_{i-1})^\alpha\|f_i^{(r)}\|_{L_p(I_i)}. 
\end{equation}
We also normalize $f_i$ so that $\|f_i^{(r)}\|_{L_p(I_i)}=1/\psi_i.$ We stress that a positive $c_2$ in \eqref{c2} exists and 
depends only on $r,$ $p,$ and $q.$ 

Since all $f_i^{(j)}$ nullify at the knots $x_k,$ the `sup' (worst case error) in \eqref{eqth2} is bounded from below by
$$ \mathrm{Sup}(n)\,:=\,\sup\bigg\{\|f\rho\|_{L_q}:\;f=\sum_{i=1}^{n-1}\beta_if_i,\;\sum_{i=1}^{n-1}|\beta_i|^p=1\bigg\}, $$
where we used the fact that $\|f^{(r)}\psi\|_{L_p}=\left(\sum_{i=1}^{n-1}|\beta_i|^p\right)^{1/p}.$ For such $f$ we have
\begin{eqnarray*}
  \|f\rho\|_{L_q} &=& \bigg(\sum_{i=1}^{n-1}\beta_i^q\|f_i\rho\|_{L_q(I_i)}^q\bigg)^{1/q}\,=\,
    \bigg(\sum_{i=1}^{n-1}\left(|\beta_i|\rho_i\|f_i\|_{L_q(I_i)}\right)^q\bigg)^{1/q} \\
    &\ge & c_2\,\bigg(\sum_{i=1}^{n-1}\left(|\beta_i|\rho_i(x_i-x_{i-1})^\alpha\|f_i^{(r)}\|_{L_p(I_i)}\right)^q\bigg)^{1/q} \\
    &=& c_2\,\bigg(\sum_{i=1}^{n-1}\left(|\beta_i|\, \frac{\omega_i}{\kappa_i} \ \kappa_i(x_i-x_{i-1})^\alpha\right)^q\bigg)^{1/q} \\
    &=& c_2\,\bigg(\frac{\|\kappa^{1/\alpha}\|_{L_1}}{n-1}\bigg)^\alpha
                \bigg(\sum_{i=1}^{n-1}|\beta_i|^q\left(\frac{\omega_i}{\kappa_i}\right)^q\bigg)^{1/q}.
\end{eqnarray*}
Thus we arrive at a maximization problem that we already had in the
proof of Theorem \ref{main1}. 

For $p\le q$ we have 
\begin{eqnarray*}
     \mathrm{Sup}(n) &=&
     c_2\,\bigg(\frac{\|\kappa^{1/\alpha}\|_{L_1}}{n-1}\bigg)^\alpha
     \max_{1\le i\le n-1} \frac{\omega_i}{\kappa_i} \,=\,
     c_2\,\bigg(\frac{\|\kappa^{1/\alpha}\|_{L_1}}{n-1}\bigg)^\alpha
      \esup_{0\le x<x_{n-1}}\frac{\omega(x)}{\kappa(x)},
\end{eqnarray*}
while for $p>q$ we have
\begin{eqnarray*}
 \mathrm{Sup}(n) &=&
 c_2\,\,\bigg(\frac{\|\kappa^{1/\alpha}\|_{L_1}}{n-1}\bigg)^\alpha
 \left(\sum_{i=1}^{n-1}\bigg(\frac{\omega_i}{\kappa_i}
 \bigg)^\frac{1}{{\alpha-r}}\right)^{{\alpha-r}} \\
 &=& c_2\,\bigg(\frac{\|\kappa^{1/\alpha}\|_{L_1}}{n-1}\bigg)^r
 \left(\sum_{i=1}^{n-1}\bigg(\frac{\|\kappa^{1/\alpha}\|_{L_1}}{n-1}\bigg)
  \bigg(\frac{\omega_i}{\kappa_i}\bigg)^\frac{1}{{\alpha-r}}
  \right)^{{\alpha-r}} \\
      &=& c_2\,\bigg(\frac{\|\kappa^{1/\alpha}\|_{L_1}}{n-1}\bigg)^r
       \left(\int_0^{x_{n-1}}\kappa^{1/\alpha}(x)\bigg(\frac{\omega(x)}{\kappa(x)}\bigg)^\frac{1}{{\alpha-r}}\rd x\right)^{{\alpha-r}},
\end{eqnarray*}
as claimed. 

For arbitrary weights, we replace $\psi,$ $\rho,$ and $\kappa$ with the corresponding step functions with 
$$\psi_i=\esup_{x\in (x_{i-1},x_i)}\psi(x),\quad\rho_i=\einf_{x\in (x_{i-1},x_i)}\rho(x),\quad 
   \kappa_i=\bigg(\frac{\|\kappa^{1/\alpha}\|_{L_1}}{n(x_i-x_{i-1})}\bigg)^\alpha,\qquad 1\le i\le n-1, $$
and go with $n$ to $+\infty.$
\end{proof}

We now comment on what happens when the domain is different from $\mathbb R_+.$ It is clear that 
Theorems \ref{main1} and \ref{main2} remain valid for $D$ being a compact interval, say $D=[0,c]$ with $c<+\infty.$ 
Consider $$D=\mathbb R.$$ In this case, we assume that $\kappa$ is nonincreasing on $[0,+\infty)$ and nondecreasing 
on $(-\infty,0].$ We have $2n+1$ knots $x_i,$ which are determined by the condition
\begin{equation}\label{condxigen}
\int_0^{x_i}\kappa^{1/\alpha}(t)\,\rd t\,=\,\frac{i}{2 n}\,\|\kappa^{1/\alpha}\|_{L_1(\mathbb R)},\qquad |i|\le n
\end{equation}
(where $\int_0^{-a}=-\int_a^0$). Note that \eqref{condxigen} automatically implies $x_0=0$.
The piecewise Taylor approximation is also correspondingly defined for negative 
arguments. With these modifications, the corresponding Theorems \ref{main1} and \ref{main2} have literally 
the same formulation for $D=\mathbb R$ and for $D=\mathbb R_+.$

\medskip
Observe that the error estimates of Theorems \ref{main1} and \ref{main2} for arbitrary $\kappa$ differ 
from the error for optimal $\kappa=\omega$ by the factor
$$ \mathrm{FCTR}(p,q,\omega,\kappa)\,:=\,
     \frac{\|\kappa^{1/\alpha}\|_{L_1}^\alpha}{\|\omega^{1/\alpha}\|_{L_1}^\alpha}\,
     \mathcal E_p^q(\omega,\kappa).$$
From this definition it is clear that for any $s,t>0$ we have 
$$\mathrm{FCTR}(p,q,s\,\omega,t\,\kappa)=\mathrm{FCTR}(p,q,\omega,\kappa).$$ 
This quantity satisfies the following estimates.

\begin{proposition}
We have 
\begin{equation}\label{fctr} 
   1\,=\,\mathrm{FCTR}(p,q,\omega,\omega)\,\le\,\mathrm{FCTR}(p,q,\omega,\kappa)\,\le\,
    \frac{\|\kappa^{1/\alpha}\|_{L_1}^\alpha}{\|\omega^{1/\alpha}\|_{L_1}^\alpha}
    \left\|\frac{\omega}{\kappa}\right\|_{L_\infty}.
\end{equation}
The rightmost inequality is actually an equality whenever $p \le q$.
\end{proposition}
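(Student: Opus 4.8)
The plan is to dispatch the four assertions of the proposition one at a time, since two of them are immediate from the definitions and only one requires a genuine estimate. First, the normalization $\mathrm{FCTR}(p,q,\omega,\omega)=1$ follows by substituting $\kappa=\omega$ directly: the prefactor $\|\kappa^{1/\alpha}\|_{L_1}^\alpha/\|\omega^{1/\alpha}\|_{L_1}^\alpha$ collapses to $1$, and in both branches of the definition $\mathcal E_p^q(\omega,\omega)$ reduces either to $\|1\|_{L_\infty}$ (for $p\le q$) or to the integral of the constant $1$ against the probability density $\kappa^{1/\alpha}/\|\kappa^{1/\alpha}\|_{L_1}$ (for $p\ge q$), hence equals $1$ in either case. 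Likewise, the claim that the rightmost inequality in \eqref{fctr} is an equality for $p\le q$ is simply definition \eqref{Edef1}, where $\mathcal E_p^q(\omega,\kappa)=\|\omega/\kappa\|_{L_\infty}$ by construction.

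For the rightmost (upper) inequality when $p>q$, I would argue by a pointwise bound inside the integral of \eqref{Edef2}. Writing $\theta:=1/(1/q-1/p)>0$, the factor $(\omega/\kappa)^{\theta}$ never exceeds $\|\omega/\kappa\|_{L_\infty}^{\theta}$ almost everywhere; pulling this constant out leaves $\int_D\kappa^{1/\alpha}/\|\kappa^{1/\alpha}\|_{L_1}=1$, and raising the result to the power $1/q-1/p=1/\theta$ recovers exactly $\|\omega/\kappa\|_{L_\infty}$. Multiplying by the common prefactor then gives the asserted upper bound on $\mathrm{FCTR}(p,q,\omega,\kappa)$.

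The left inequality $\mathrm{FCTR}(p,q,\omega,\kappa)\ge 1$ is the only part needing real work. For $p\le q$ it is short: from $\omega\le\|\omega/\kappa\|_{L_\infty}\,\kappa$ a.e., raising to the positive power $1/\alpha$ and integrating yields $\|\omega^{1/\alpha}\|_{L_1}\le\|\omega/\kappa\|_{L_\infty}^{1/\alpha}\,\|\kappa^{1/\alpha}\|_{L_1}$, which after taking $\alpha$-th powers is precisely the claim. The case $p>q$ is the crux, and here the key device is a Hölder splitting. Setting $\lambda:=(1/q-1/p)/\alpha$ and $\theta$ as above, one checks the identity $\theta\lambda=1/\alpha$, which gives the pointwise factorization
$$\omega^{1/\alpha}=\big(\kappa^{1/\alpha}\big)^{1-\lambda}\Big(\kappa^{1/\alpha}(\omega/\kappa)^{\theta}\Big)^{\lambda}.$$
Applying Hölder's inequality with the conjugate exponents $1/(1-\lambda)$ and $1/\lambda$ then produces
$$\|\omega^{1/\alpha}\|_{L_1}\le\|\kappa^{1/\alpha}\|_{L_1}^{1-\lambda}\Big(\int_D\kappa^{1/\alpha}(\omega/\kappa)^{\theta}\,\rd x\Big)^{\lambda}.$$
Recognizing the last integral as $\|\kappa^{1/\alpha}\|_{L_1}\,\mathcal E_p^q(\omega,\kappa)^{\theta}$ from \eqref{Edef2} and using $\theta\lambda=1/\alpha$, the right-hand side simplifies to $\|\kappa^{1/\alpha}\|_{L_1}\,\mathcal E_p^q(\omega,\kappa)^{1/\alpha}$; taking $\alpha$-th powers and rearranging delivers $\mathrm{FCTR}(p,q,\omega,\kappa)\ge 1$.

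The genuine obstacle, then, is discovering the correct exponents $\lambda$ and $\theta$ so that the Hölder split both reproduces $\omega^{1/\alpha}$ and lands exactly on the integral defining $\mathcal E_p^q(\omega,\kappa)$; once the algebraic relations $\theta\lambda=1/\alpha$ and $(1-\lambda)+\lambda=1$ are in hand, the remainder is bookkeeping. I would also record at the outset that $\alpha>0$ throughout (for $p>q$ one has $\alpha=r+(1/q-1/p)>r\ge 1$, and in the regime where the quantities $\|\omega^{1/\alpha}\|_{L_1}$ are defined $\alpha>0$ in general), since this is what legitimizes raising inequalities to the power $1/\alpha$ and guarantees $\lambda\in(0,1)$, so that the Hölder exponents are admissible.
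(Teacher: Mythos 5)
Your proof is correct and follows essentially the same route as the paper's: the easy parts ($\mathrm{FCTR}(p,q,\omega,\omega)=1$, the $p\le q$ equality, the pointwise upper bound, and the sup-norm argument for the left inequality when $p\le q$) are handled identically, and your H\"older splitting with conjugate exponents $1/(1-\lambda)$ and $1/\lambda$ is exactly the paper's key step---Jensen's inequality for the convex power $t\mapsto t^{1/\lambda}$ with respect to the probability density $\kappa^{1/\alpha}/\|\kappa^{1/\alpha}\|_{L_1}$---written in un-normalized form. The paper merely assumes $\|\kappa^{1/\alpha}\|_{L_1}=\|\omega^{1/\alpha}\|_{L_1}=1$ at the outset, which compresses the exponent bookkeeping you carry out explicitly.
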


\begin{proof}
Assume without loss of generality that 
$\|\kappa^{1/\alpha}\|_{L_1}=\|\omega^{1/\alpha}\|_{L_1}=1,$ so that 
$\mathrm{FCTR}(p,q,\omega,\kappa)=\mathcal E_p^q(\omega,\kappa).$ Then for any $p$ and $q$
$$1=\|\omega^{1/\alpha}\|_{L_1}^\alpha\le\|\kappa^{1/\alpha}\|_{L_1}^\alpha
    \left\|\frac{\omega^{1/\alpha}}{\kappa^{1/\alpha}}\right\|_{L_\infty}^\alpha=
    \left\|\frac{\omega}{\kappa}\right\|_{L_\infty}, $$
which equals $\mathcal E_p^q(\omega,\kappa)$ for $p\le q.$ For $p>q$ we have 
$(1/q-1/p)/\alpha=1-r/\alpha<1,$ so that we can use
Jensen's inequality to get
$$  \mathcal E_p^q(\omega,\kappa)=\left(\int_D\kappa^{1/\alpha}(x)
      \bigg(\frac{\omega^{1/\alpha}(x)}{\kappa^{1/\alpha}(x)}\bigg)^{\frac{\alpha}{{\alpha-r}}}\rd x
      \right)^{\left(\frac{{\alpha-r}}{\alpha}\right)\alpha}\ge \left(\int_D\kappa^{1/\alpha}(x)
      \bigg(\frac{\omega^{1/\alpha}(x)}{\kappa^{1/\alpha}(x)}\bigg)\rd x\right)^\alpha=1. $$
The remaining inequality $\mathcal E_p^q(\omega,\kappa)\le \left\|\frac{\omega}{\kappa}\right\|_{L_\infty}$
is obvious.
\end{proof}

Although the main idea of this paper is to replace $\omega$ by
another function $\kappa$ that is easier to handle, 
our results allow a further interesting observation that is
illustrated in the following example.

\begin{example}\label{example1} \rm
Let $D=\mathbb R,$ $$r=1,\qquad p=+\infty,\qquad q=1, $$ and the weights
$$ \rho(x)=\frac{1}{\sqrt{2\pi}}\exp\left(\frac{-x^2}{2}\right),\qquad \psi(x)=1. $$
Then $\alpha=2$ and $1/q-1/p=1,$ and $\omega(x)=\rho(x)$. Suppose that instead of $\omega$ we use 
$$  \kappa_\sigma(x)=\frac{1}{\sqrt{2\pi\sigma^2}}\exp\left(\frac{-x^2}{2\sigma^2}\right)
     \qquad\mbox{with}\qquad\sigma^2>0. $$ 

\medskip\noindent
Since $p>q,$ we have
$$ \mathrm{FCTR}(p,q,\omega,\kappa_\sigma)\,=\,\frac{\|\kappa_{\sigma}^{1/2}\|_{L_1}^2}{\|\omega^{1/2}\|_{L_1}^2}
     \int_\mathbb R \frac{\kappa_{\sigma}^{1/2}(x)}{\|\kappa_{\sigma}^{1/2}\|_{L_1}}\,\frac{\omega(x)}{\kappa_{\sigma}(x)}\,\rd x \\
     \,=\, 
     \left\{\begin{array}{rl} +\infty & \,\mbox{if}\quad \sigma^2\le 1/2, \\
                          \frac{\sigma^2}{\sqrt{2\sigma^2-1}} & \,\mbox{if}\quad\sigma^2>1/2. 
       \end{array}\right. 
$$ 
The graph of $\mathrm{FCTR}(p,q,\omega,\kappa_\sigma)$ is drawn in Fig.~\ref{fctr1}. It follows that it is 
safer to overestimate the actual variance $\sigma^2=1$ than to underestimate it.
\begin{figure}
\begin{center}
\includegraphics[width=13cm]{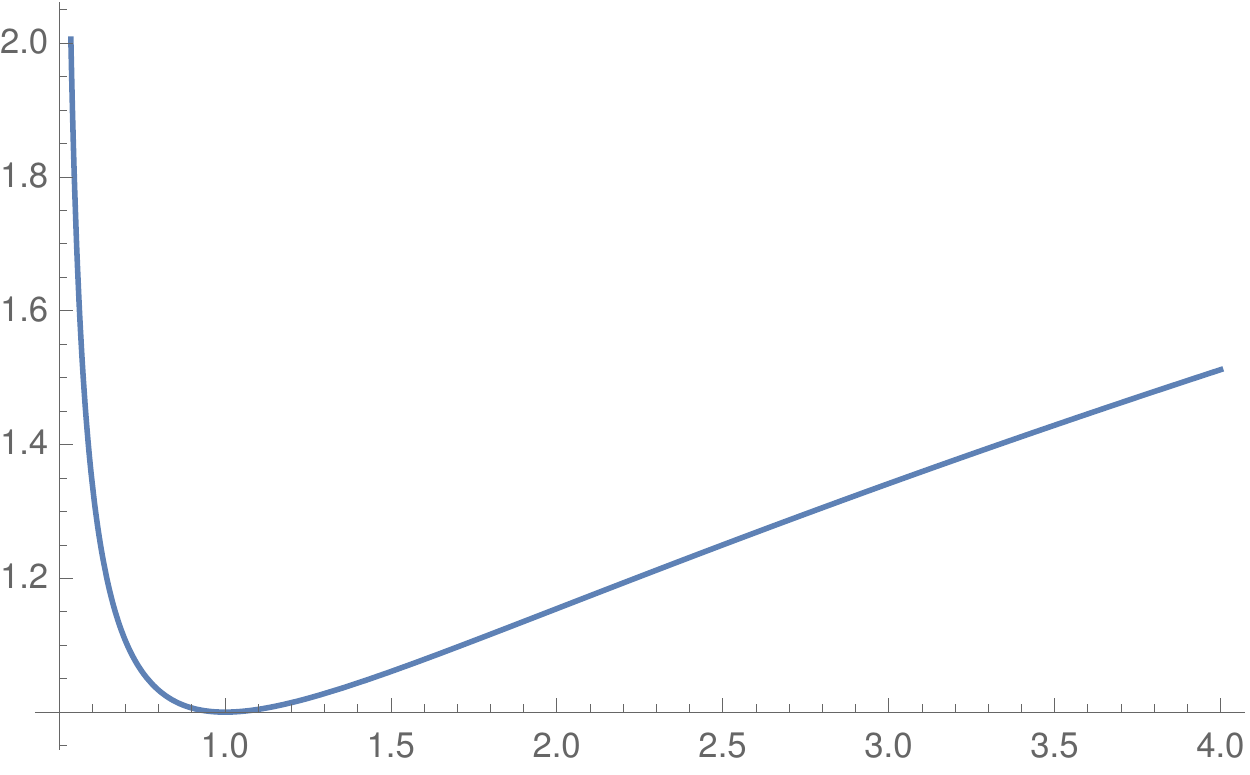}
\caption{Plot of $\mathrm{FCTR}(p,q,\omega,\kappa_\sigma)$ versus $\sigma^2$ from Example~\ref{example1}}
\label{fctr1}
\end{center}
\end{figure}
\end{example}

\section{Special cases}\label{sec:third}

Below we apply our results to specific 
weights $\rho,\psi$, and specific values of $p$ and $q$.

\subsection{Gaussian $\rho$ and $\psi$}
Consider $D=\bbR$, 
\[
\rho(x)\,=\,\frac1{\sigma\,\sqrt{2\,\pi}}\,
\exp\left(\frac{-x^2}{2\,\sigma^2}\right) \quad\mbox{and}\quad
  \psi(x)\,=\,\exp\left(\frac{-x^2}{2\,\lambda^2}\right) 
\]
for positive $\sigma$ and $\lambda$. Since
\[
\omega(x)\,=\,\frac1{\sigma\,\sqrt{2\,\pi}}\,
\exp\left(\frac{-x^2}2\,(\sigma^{-2}-\lambda^{-2})\right),
\]
for $\|\omega^{1/\alpha}\|_{L_1}<\infty$ we have to have 
$\lambda\,>\,\sigma$, and then
\[\|\omega^{1/\alpha}\|_{L_1}^\alpha\,=\,\frac1{\sigma\,\sqrt{2\,\pi}}\,
\left(\frac{\alpha\,2\,\pi}{\sigma^{-2}-\lambda^{-2}}\right)^{\alpha/2}.
\]
We propose using
\[\kappa(x)\,=\,\kappa_a(x)\,=\,\exp(-|x|\,a)\quad\mbox{for}\quad a\,>\,0.
\]
Then $\|\kappa_a^{1/\alpha}\|_{L_1(D)}=2 \alpha/a$ and the points $x_{-n},\dots,x_{n}$ satisfying \eqref{condxigen},
\[
\int_{0}^{x_i}\kappa_a^{1/\alpha}(t)\,\rd t\,=\,\frac{i}{2 n}\,
\int_{-\infty}^\infty\kappa_a^{1/\alpha}(t)\,\rd t\quad\mbox{for}
\quad |i|\le n,
\]
are given by
\begin{equation}\label{GG-quant}
    x_{i}\,=\,-x_{-i}\,=\,-\frac{\alpha}a\,
  \ln\left(1-\frac{i}{n}\right)\quad\mbox{for\ } 0 \le i \le n.
\end{equation}
In particular, we have
\[x_{-n}\,=\,-\infty,\quad x_0\,=\,0, \quad\mbox{and}\quad
x_n\,=\,\infty. 
\]

We now consider the two cases $p\le q$ and $p>q$ separately:

\subsubsection{Case of $p\le q$}
Clearly
\[\calE^q_p(\omega,\kappa_a)\,=\,
\left\|\frac{\omega}{\kappa_a}\right\|_{L_\infty(D)}\,=\,
\frac1{\sigma\,\sqrt{2\,\pi}}\,\exp\left(\frac{a^2}{2\,(\sigma^{-2}
-\lambda^{-2})}\right)
\]
and
\[
\min_{a>0}\|\kappa_a^{1/\alpha}\|_{L_1(D)}^\alpha\,
\left\|\frac\omega{\kappa_a}\right\|_{L_\infty}\quad
\mbox{is attained at}\quad 
a_*\,=\,\sqrt{\alpha\,\left(\frac{1}{\sigma^2}-\frac{1}{\lambda^2}\right)}.
\]
Hence, for $p \le q$ we have that
\[
\mbox{FCTR}(p,q,\omega,\kappa_{a_*})\,=\,
\left(\frac{2\,{\rm e}}{\pi}\right)^{\alpha/2}.
\]
Note that ${\rm FCTR}(p,q,\omega,\kappa_{a_*})$ does not depend 
on $\sigma$ and $\lambda$ (as long as $\lambda>\sigma$). 
For instance, we have the following rounded values:
$$
\begin{array}{c||c|c|c|c}
\alpha & 1 & 2 & 3 & 4 \\
\hline
{\rm FCTR}(p,q,\omega,\kappa_{a_*}) &  1.315 & 1.731 & 2.276 & 2.995
\end{array}
$$

\subsubsection{Case of $p>q$}
We have now 
\[ \calE_p^q(\omega,\kappa_a)\,=\,\left(\frac{a}\alpha
\right)^{{\alpha-r}}\,\frac1{\sigma\,\sqrt{2\,\pi}}\,A^{{\alpha-r}},
\]
where
\begin{eqnarray*}
  A&=&\int_0^\infty\exp\left(-\frac{x^2\,{(\sigma^{-2}-\lambda^{-2})}}
  {2\,{(\alpha-r)}}
   +\frac{a\,x\,{r}}{\alpha\,{(\alpha-r)}} \right)\,\rd x\\
   &=&\int_0^\infty\exp\left(-\frac{\sigma^{-2}-\lambda^{-2}}
   {2\,{(\alpha-r)}}\,
   \left(x-\frac{a\,{r}}{\alpha\,{(\sigma^{-2}-\lambda^{-2})}}
   \right)^2
   +\frac{a^2\,{r}^2}{2\,\alpha^2\,
   {(\alpha-r)\,(\sigma^{-2}-\lambda^{-2})}}\right)\,\rd x\\
   &=&\exp\left(\frac{a^2\,{r}^2}
   {2\,\alpha^2\,{(\alpha-r)\,(\sigma^{-2}-\lambda^{-2})}}\right)\,
   \int_{-\frac{a\,{r}}
     {\alpha\,{(\sigma^{-2}-\lambda^{-2})}}}^\infty
   \exp\left(-\frac{{(\sigma^{-2}-\lambda^{-2})}\,t^2}
    {2\,{(\alpha-r)}}\right)\rd t\\
    &=&\exp\left(\frac{a^2\,{r}^2}{2\,\alpha^2\,{(\alpha-r)\,
        \,(\sigma^{-2}-\lambda^{-2})}}\right)\,
   \sqrt{\frac{\pi\,{(\alpha-r)}}{2\,{(\sigma^{-2}-\lambda^{-2})}}}\,
   \left[1{+}{\rm erf}\left(\frac{a\,{r}}{\alpha\,
       \sqrt{2\,{(\alpha-r)\,(\sigma^{-2}-\lambda^{-2})}}}\right)\right],     
\end{eqnarray*}
where $\mathrm{erf}(z):=\frac{2}{\sqrt{\pi}}\int_0^z {\rm e}^{-t^2}\,\rd t$. This gives 
\begin{eqnarray*}
\calE_p^q(\omega,\kappa_a)&=&
\left(\frac{a^2\,\pi\,{(\alpha-r)}}{\alpha^2\,2\,
  {(\sigma^{-2}-\lambda^{-2})}}\right)^{{(\alpha-r)}/2}\,
\frac1{\sigma\,\sqrt{2\,\pi}}\,
\exp\left(\frac{a^2\,{r}^2}{\alpha^2\,2\,{(\sigma^{-2}-\lambda^{-2})}}
  \right)\\
 &&\qquad\times \left[1{+}{\rm erf}\left(\frac{a\,r}
 {\alpha\,\sqrt{2\,{(\alpha-r)\,(\sigma^{-2}-\lambda^{-2})}}}
\right)\right]^{{\alpha-r}}.
\end{eqnarray*}
Since $$\frac{\|\kappa_a^{1/\alpha}\|_{L_1(D)}^{\alpha}}{\|\omega^{1/\alpha}\|_{L_1(D)}^{\alpha}}=\sigma \sqrt{2 \pi} \left(\frac{2 \alpha (\sigma^{-2}-\lambda^{-2})}{\pi a^2}\right)^{\alpha/2}$$ we obtain 
\begin{eqnarray*}
\mbox{FCTR}(p,q,\omega,\kappa_{a}) & = &   \left(\frac{2 \alpha (\sigma^{-2}-\lambda^{-2})}{\pi a^2}\right)^{r/2} \left(\frac{\alpha-r}{\alpha}\right)^{(\alpha-r)/2}\,
\exp\left(\frac{a^2\,r^2}{2\,\alpha^2(\sigma^{-2}-\lambda^{-2})}\right)\\
& & \times \left[1+{\rm erf}\left(\frac{a\,r}{\alpha\,\sqrt{2\,(\alpha-r)\,(\sigma^{-2}-\lambda^{-2})}}\right)\right]^{\alpha-r}.
\end{eqnarray*}

We provide some numerical tests for $q=1$ and $p=2$ or
$p=\infty$. Then $\alpha=r+1/2$ or $\alpha=r+1$, respectively. 
Recall that results for $q=1$ are also applicable to the
$\rho$-integration problem.

For $r\in \{1,2\}$, $p\in \{2,\infty\},$ $\lambda=2$ and $\sigma=1$, we vary
$a$ and obtain the following rounded values:
$$
\begin{array}{l||c|c|c|c||c|l}
\ \ \ \ \ \ \ \ \ \ \ a & 1 & 2 & 3 & 4 & \\
\hline
{\rm FCTR}(2,1,\omega,\kappa_{a}) & 1.135   & 1.476 & 4.361 & 26.036 & r=1 &\\
{\rm FCTR}(2,1,\omega,\kappa_{a}) & 1.645   & 1.552 & 5.836 & 65.061 & r=2 & \raisebox{1.5ex}[-1.5ex] {$p=2$}\\
\hline
{\rm FCTR}(\infty,1,\omega,\kappa_{a}) & 1.172   & 1.179 & 1.979 & 4.920 & r=1 & \\
{\rm FCTR}(\infty,1,\omega,\kappa_{a}) & 1.733   & 1.269 & 2.617 & 11.826 & r=2& \raisebox{1.5ex}[-1.5ex] {$p=\infty$}
\end{array}
$$

\subsection{Gaussian $\rho$ and Exponential $\psi$}
Consider $D=\bbR$,
\[\rho(x)\,=\,\frac1{\sigma\,\sqrt{2\,\pi}}
\exp\left(\frac{-x^2}{2\,\sigma^2}\right)\quad\mbox{and}\quad
\psi(x)\,=\,\exp\left(\frac{-|x|}\lambda\right)
\]
for positive $\lambda$ and $\sigma$.
Now 
\begin{equation}\label{defomega1}
\omega(x)=\frac{\rho(x)}{\psi(x)}=\frac{1}{\sigma \sqrt{2 \pi}} \exp\left(-\frac{x^2}{2 \sigma^2}+\frac{|x|}{\lambda}\right),
\end{equation}
and
\begin{eqnarray*}
\|\omega^{1/\alpha}\|_{L_1(D)}^\alpha
&=&\frac1{\sigma\,\sqrt{2\,\pi}}\,\left(2\,\int_0^\infty
\exp\left(\frac{-x^2}{2\,\sigma^2\,\alpha}+\frac{x}{\lambda\,\alpha}\right)
\rd x\right)^\alpha\\
&=&\frac1{\sigma\,\sqrt{2\,\pi}}\,\left(2\,\int_0^\infty
\exp\left(\frac{-(x/\sigma-\sigma/\lambda)^2}{2\,\alpha}+
\frac{\sigma^2}{2\,\lambda^2\,\alpha}\right)\rd x\right)^\alpha\\
&=&\frac1{\sigma\,\sqrt{2\,\pi}}\,
\exp\left(\frac{\sigma^2}{2\,\lambda^2}\,\right)\,
\left(\sigma\,\sqrt{2\,\pi\,\alpha}\,\frac2{\sqrt{\pi}}\,
\int_{-\sigma/(\lambda\,\sqrt{2\,\alpha})}^\infty\exp(-y^2)\,\rd y\right)^\alpha\\
&=&\frac1{\sigma\,\sqrt{2\,\pi}}\,
\exp\left(\frac{\sigma^2}{2\,\lambda^2}\right)\,
\left(\sigma\,\sqrt{2\,\pi\,\alpha}\,\left(1+
{\rm erf}\left(\frac{\sigma}{\lambda\,\sqrt{2\,\alpha}}\right)\right)
\right)^\alpha.
\end{eqnarray*}
As before, we propose using $\kappa_a(x)=\exp(-|x|\,a)$. Hence
$\|\kappa_a^{1/\alpha}\|_{L_1}=2\,\alpha/a$ and the points $x_i$ are
given by \eqref{GG-quant}.

\subsubsection{Case of $p\le q$}
We have 
\[
\calE^q_p(\omega,\kappa_a)\,=\,
\left\|\frac{\omega}{\kappa_a}\right\|_{L_\infty(D)}\,=\,
\frac1{\sigma\,\sqrt{2\,\pi}}\,
\exp\left(\frac{\sigma^2\,(a+\lambda^{-1})^2}2\right).
\]

It is easy to verify that the minimum over $a>0$ satisfies
\[\min_{a>0}\|\kappa_a^{1/\alpha}\|_{L_1(D)}^\alpha\,
\left\|\frac\omega{\kappa_a}\right\|_{L_\infty(D)}
\,=\,\frac1{\sigma\,\sqrt{2\,\pi}}\,
\left(\frac{2\,\alpha}{a_*}\right)^\alpha\,
\exp\left(\frac{\sigma^2\,(a_*+\lambda^{-1})^2}2\right)
\]
for
\[
a_*\,=\,\frac{\sqrt{1+4\,\alpha\,\lambda^2/\sigma^2}-1}{2\,\lambda}.
\]

Therefore
\[{\rm FCTR}(p,q, \omega,\kappa_{a_*})\,=\,
\left(\sqrt{\frac{2\,\alpha}{\pi}}\,
\frac1{a_*\,\sigma\,\left(1+{\rm erf}(\sigma/\sqrt{2\,\alpha\,\lambda})\right)}
\right)^\alpha\, \exp\left(\frac{\sigma^2a_*(a_*+2/\lambda)}2\right).
\]
Note that the value of ${\rm FCTR}$ depends on $p$ and $q$ only via $\alpha$. 
Rounded values of ${\rm FCTR}$ for $\alpha \in \{1,2\}$ and $\sigma=1$ and 
various $\lambda$'s are\footnote{Computed with {\sc Mathematica}}:
\[\begin{array}{c||c|c|c|c|c|c||c} 
\lambda  & 1 & 5 & 10 & 20 & 30 & 100 & \\ \hline
{\rm FCTR} &   1.723 &  1.183  &  1.162  &  1.174  &  1.188  &  1.231 & \alpha=1\\
{\rm FCTR} & 2.468 &  1.460  &  1.436 &  1.465 &  1.491 &  1.573 & \alpha=2
\end{array}
\]

\subsubsection{Case of $p>q$}
We have 
\[\calE^q_p(\omega,\kappa_a)\,=\,\left(\frac{a}\alpha\right)^{\alpha-r}\,
\frac1{\sigma\,\sqrt{2\,\pi}}\,A^{\alpha-r},  
\]
where now
\begin{eqnarray*}
A&=&\int_0^\infty\exp\left(-\frac{x^2}{2\,\sigma^2\,(\alpha-r)}
+x\left(\frac{a}{\alpha-r}+\frac1{\lambda\,(\alpha-r)}-
\frac{a}\alpha\right)\right)\rd x\\
&=&\int_0^\infty\exp\left(-\frac1{2\,\sigma^2\,(\alpha-r)}\,
\left(x^2-2\,x\,\sigma^2\,\left(\frac{a\,r}\alpha+\frac1\lambda
\right)\right)\right)\rd x\\
&=&\exp\left(\frac{\sigma^2\,(\frac{a\,r}\alpha+\frac1\lambda)^2}
{2\,(\alpha-r)}\right)\,\int_0^\infty
\exp\left(-\frac{(x-\sigma^2(\frac{a\,r}\alpha+\frac1\lambda))^2}
{2\,\sigma^2\,(\alpha-r)}\right)\rd x\\
&=&\exp\left(\frac{\sigma^2\,(\frac{a\,r}\alpha+\frac1\lambda)^2}
{2\,(\alpha-r)}\right)\,\sqrt{\frac{\sigma^2\,\pi\,(\alpha-r)}{2}}\,
\left[1+{\rm erf}\left(\frac{\sigma\,(\frac{a\,r}\alpha+\frac1\lambda)}
  {\sqrt{2\,(\alpha-r)}}\right)\right].
\end{eqnarray*}
Hence 
\begin{eqnarray*}
\calE^q_p(\omega,\kappa)&=&\frac1{\sigma\,\sqrt{2\,\pi}}\,
\left(\frac{a^2\,\pi\,\sigma^2\,(\alpha-r)}{2\,\alpha^2}\right)^{(\alpha-r)/2}
\,\exp\left(\frac{\sigma^2}{2}\,\left(\frac{a\,r}\alpha+\frac1\lambda\right)^{2}\right)\\
&&\qquad\times
\left[1+{\rm erf}\left(\frac{\sigma\,(\frac{a\,r}\alpha+\frac1\lambda)}
  {\sqrt{2\,(\alpha-r)}}\right)\right]^{\alpha-r}.
\end{eqnarray*}

Since $$\frac{\|\kappa_a^{1/\alpha}\|_{L_1(D)}^{\alpha}}{\|\omega^{1/\alpha}\|_{L_1(D)}^{\alpha}}=
\left(\frac{2 \alpha}{a}\right)^{\alpha} \sigma \sqrt{2 \pi} \exp\left(-\frac{\sigma^2}{2 \lambda^2}\right) \left(\sigma \sqrt{2 \pi \alpha}\left(1+{\rm erf}\left(\frac{\sigma}{\lambda \sqrt{2 \alpha}}\right)\right)\right)^{-\alpha}$$ we obtain
\begin{eqnarray*}
\mbox{FCTR}(p,q,\omega,\kappa_{a}) & = & \left(\frac{1}{a\sigma} \sqrt{\frac{2 \alpha}{\pi}}\right)^{\alpha}   \left(\frac{a^2\,\pi\,\sigma^2\,(\alpha-r)}{2\,\alpha^2}\right)^{(\alpha-r)/2}
\,\exp\left(\frac{\sigma^2}{2}\,\left(\left(\frac{a\,r}\alpha+\frac1\lambda\right)^{2}-\frac{1}{\lambda^2}\right)\right)\\
&&\times \frac{\left[1+{\rm erf}\left(\frac{\sigma\,(\frac{a\,r}\alpha+\frac1\lambda)}{\sqrt{2\,(\alpha-r)}}\right)\right]^{\alpha-r}}{\left[1+{\rm erf}\left(\frac{\sigma}{\lambda \sqrt{2 \alpha}}\right)\right]^{\alpha}}.
\end{eqnarray*}

We again provide numerical results, first for the case $p=2$ and
$q=1$, i.e., $\alpha=r+1/2$.

For $r\in \{1,2\}$ and varying $a$, we obtain the following rounded values:
$$
\begin{array}{c||c|c|c|c||c|l}
a & 1 & 2 & 3 & 4 & & \\
\hline
{\rm FCTR}(2,1,\omega,\kappa_{a}) & 1.273   & 2.426 & 9.570 & 66.233 & \lambda=1,\ \sigma=1 & \\
{\rm FCTR}(2,1,\omega,\kappa_{a}) & 1.181   & 1.642 & 4.652 & 23.070 & \lambda=2,\ \sigma=1 & \raisebox{1.5ex}[-1.5ex] {$r=1$}\\
\hline
{\rm FCTR}(2,1,\omega,\kappa_{a}) &  1.747  & 2.546 & 12.473 & 146.677 & \lambda=1,\ \sigma=1 & \\
{\rm FCTR}(2,1,\omega,\kappa_{a}) &  1.747  & 1.729 & 5.683 & 44.797  & \lambda=2,\ \sigma=1 & \raisebox{1.5ex}[-1.5ex] {$r=2$}
\end{array}
$$

We now change $p$ to $p=\infty$, and choose again $q=1$, which implies $\alpha=r+1$.
For $r\in\{1,2\}$ and varying $a$ we obtain the following rounded values:

$$
\begin{array}{c||c|c|c|c||c|l}
a & 1 & 2 & 3 & 4 & & \\
\hline
{\rm FCTR}(\infty,1,\omega,\kappa_{a}) &  1.203  & 1.512  & 3.156  &  9.409 & \lambda=1,\ \sigma=1 & \\
{\rm FCTR}(\infty,1,\omega,\kappa_{a}) &  1.199   &  1.242  & 2.081  &  4.888 & \lambda=2,\ \sigma=1 & \raisebox{1.5ex}[-1.5ex] {$r=1$}\\
\hline
{\rm FCTR}(\infty,1,\omega,\kappa_{a}) & 1.724    & 1.700 & 4.509  &  23.434 & \lambda=1,\ \sigma=1 & \\
{\rm FCTR}(\infty,1,\omega,\kappa_{a}) &  1.827  & 1.366  & 2.647 &  9.897 & \lambda=2,\ \sigma=1 & \raisebox{1.5ex}[-1.5ex] {$r=2$}
\end{array}
$$

\subsection{Log-Normal $\rho$ and constant $\psi$}
Consider $D=\bbR_+$,  $\psi(x)=1$ and 
\begin{equation}\label{rholognor}
\rho(x)\,=\,\omega(x)\,=\,\frac1{x\,\sigma\,\sqrt{2\,\pi}}\,
\exp\left(-\frac{(\ln x-\mu)^2}{2\,\sigma^2}\right)
\end{equation}
for given $\mu\in\bbR$ and $\sigma>0$. 

For $\kappa$ we take
\[
\kappa_c(x)\,=\,\left\{\begin{array}{ll}
1  &  \mbox{if\ }x\in[0,{\rm e}^{\mu}],\\
\exp(c\,(\mu-\ln x))  &  \mbox{if\ }x >{\rm e}^{\mu},\end{array}\right.
\]
for positive $c$. For $\kappa_c^{1/\alpha}$ to be integrable we have to restrict $c$
so that
\[
c>\alpha.
\]

It can be checked that
\[\|\kappa_c^{1/\alpha}\|^\alpha_{L_1(D)}\,=\,\left(\frac{c}{c-\alpha}
\right)^\alpha\,{\rm e}^{\alpha\,\mu}.
\]
Then the points $x_i$ for $i=0,1,\ldots,n$ that satisfy \eqref{knots1} are given by 
$$x_i=\left\{
\begin{array}{ll}
 \frac{c}{c-\alpha}\, {\rm e}^{\mu} \,\frac{i}{n} & \mbox{ for $i \le n \, \frac{c-\alpha}{c}$,}\\
 {\rm e}^{\mu} \, \left(\frac{\alpha}{c}\, \frac{n}{n-i}\right)^{\alpha/(c-\alpha)} & \mbox{ otherwise.}
\end{array}\right.$$

\subsubsection{Case of $p \le q$}

We determine $\|\omega/\kappa_c\|_{L_\infty(D)}$.
For $x\le{\rm e}^{\mu}$ we have 
\[
\frac{\omega(x)}{\kappa_c(x)}\,=\,\omega(x)\,=\,
\frac1{\sigma\,\sqrt{2\,\pi}}\,
\exp\left(-\frac{(t-\mu)^2}{2\,\sigma^2}-t\right)\quad
\mbox{with}\quad t\,=\,\ln x\le\,\mu.
\]
Its maximum is attained at $t=\mu-\sigma^2$ and
\[\max_{x\le{\rm e}^{\mu}}\frac{\omega(x)}{\kappa_c(x)}\,=\,
\frac1{\sigma\,\sqrt{2\,\pi}}\,\exp\left(\frac{\sigma^2}2-\mu\right). 
\]
For $x>{\rm e}^{\mu}$,
\[
\frac{\omega(x)}{\kappa_c(x)}\,=\,
\frac1{\exp(c\, \mu)\,\sigma\,\sqrt{2\,\pi}}\,\exp\left(-\frac{(t-\mu)^2}{2\,\sigma^2}+
t\,(c-1)\right)\quad\mbox{with $t\,=\,\ln x\,>\,\mu$.}
\]
The maximum of the expression above is attained at $t=\mu+\sigma^2\,(c-1)$ and
\begin{eqnarray*}
\sup_{x>{\rm e}^{\mu}}\frac{\omega(x)}{\kappa_c(x)}\,&=&\,
\frac1{\exp(c\, \mu)\,\sigma\,\sqrt{2\,\pi}}\,\exp\left((c-1)\,\mu
+\frac{(c-1)^2\,\sigma^2}2\right)\\
\,&=&\,
\frac1{\sigma\,\sqrt{2\,\pi}}\,\exp\left(-\mu+
\frac{(c-1)^2\,\sigma^2}2\right).
\end{eqnarray*}
This yields that
\[
\left\|\frac{\omega}{\kappa_c}\right\|_{L_\infty(D)}\,=\,\frac1{\sigma\,
  \sqrt{2\,\pi}}\,\exp\left(-\mu+\frac{\sigma^2}2\,\max(1,(c-1)^2)
\right).
\]

To find the optimal value of $c$, note that 
\[
\left\|\frac\omega{\kappa_c}\right\|_{L_\infty(D)}\,
\|\kappa_c^{1/\alpha}\|_{L_1(D)}^\alpha\,=\,
\frac{{\rm e}^{(\alpha-1)\,\mu}}{\sigma\,\sqrt{2\,\pi}}\,
\left(f(c)\right)^\alpha,
\]
where $f(c)$ is given by 
\[
f(c)\,=\,\exp\left(\frac{\sigma^2\,\max(1,(c-1)^2)}{2\,\alpha}\right)\,
\left(1+\frac\alpha{c-\alpha}\right). 
\]

Consider first $\alpha\ge 2$ and recall the restriction $c>\alpha$.
For such values of $c$ we have $$f(c)\,=\,\exp\left(\frac{\sigma^2\,(c-1)^2}{2\,\alpha}\right)\,
\left(1+\frac\alpha{c-\alpha}\right)$$ and hence
\begin{eqnarray*}
  f'(c)&=&   \frac{\sigma^2}{\alpha\,(c-\alpha)^2}\,
  \exp\left(\frac{\sigma^2}{2\alpha}\,(c-1)^2\right)\,
  \left(c\,(c-1)\,(c-\alpha)-\frac{\alpha^2}{\sigma^2}\right).  
\end{eqnarray*}
Therefore, 
\[
\min_{c>\alpha} f(c)\,=\,f(c_*)\,=\,\exp\left(
\frac{\sigma^2\,(c_*-1)^2}{2\,\alpha}\right)\,\frac{c_*}{c_*-\alpha}
\]
for $c_*$ such that
\begin{equation}\label{c-star}
c_*\,>\,\alpha\quad\mbox{and}\quad c_*\,(c_*-1)\,(c_*-\alpha)\,=\,
\frac{\alpha^2}{\sigma^2}. 
\end{equation}

Consider next $\alpha\in (0,2)$. Then for $c\le2$, the minimum of
$f(c)$ is attained in $c=2$, and it is a global minimum if
$2(2-\alpha)\ge\alpha^2/\sigma^2$. Otherwise, the minimum is at
$c_*$ given by \eqref{c-star}.

In summary, for $\alpha > 0$, we have
\[
\min_{c>\alpha}\left\|\frac\omega{\kappa_c}\right\|_{L_\infty(D)}\,
\|\kappa_c^{1/\alpha}\|_{L_1(D)}^\alpha\,=\,
\frac{{\rm e}^{(\alpha-1)\,\mu}}{\sigma\,\sqrt{2\,\pi}}\times
\begin{cases}
\exp\left(\frac{\sigma^2\,(c_*-1)^2}2\right)\,
\left(\frac{c_*}{c_*-\alpha}\right)^\alpha &
\mbox{if $\alpha\, \ge \, 2$}\\
&\mbox{or $2\,(2-\alpha)\,  \le \, \frac{\alpha^2}{\sigma^2}$},\\
\exp\left(\frac{\sigma^2}2\right)\,\left(\frac2{2-\alpha}\right)^\alpha
&\mbox{otherwise}.  
\end{cases}
\]

To derive the value of the $L_1$ norm of $\omega^{1/\alpha}$, we 
will use the following well-known facts: If ${\bf X}_{\sigma,\mu}$ is a log-normally distributed random 
variable with parameters $\sigma$ and $\mu$, then the mean value and the variance of 
${\bf X}_{\sigma,\mu}$ are, respectively, equal to 
\[
\expect({\bf X}_{\sigma,\mu})\,=\,\exp\left(\sigma^2/2+\mu\right)
\quad\mbox{and}\quad\expect\left({\bf X}_{\sigma,\mu}
-\expect({\bf X}_{\sigma,\mu})\right)^2\,=\,
\left(\exp\left(\sigma^2\right)-1\right)\,\exp\left(\sigma^2+2\,
\mu\right). 
\]
Hence
\begin{equation}\label{well-known}
\expect\left({\bf X}_{\sigma,\mu}^2\right)\,=\,
\exp\left(2\sigma^2+2\,\mu\right). 
\end{equation}

If $\alpha=1$, then $\|\omega^{1/\alpha}\|_{L_1(D)}^\alpha=1$, and then 
\[
\mbox{FCTR}(p,q,\omega,\kappa_{c_*})\,=\,
\frac{1}{\sigma\,\sqrt{2\,\pi}}\,
\begin{cases}
\frac{c_*}{c_*-1}\,\exp\left(\frac{\sigma^2\,(c_*-1)^2}2\right)
&\mbox{if $2\, \le \, \frac{1}{\sigma^2}$},\\[0.5em]
2\, \exp\left(\frac{\sigma^2}2\right)
&\mbox{otherwise}.  
\end{cases}
\]

For $\alpha\in (1,2)$, to simplify the notation, we will use, in the following, parameters $s$
and $\gamma$ given by 
\[
s\,=\,\frac{2\,\alpha}{\alpha-1}\quad\mbox{and}\quad
\gamma\,=\,\frac{\sigma\,\sqrt{\alpha}}{s}.
\]
The change of the variable $x=t^s$ gives 
\begin{eqnarray*}
(\sigma\,\sqrt{2\,\pi})^{1/\alpha}\,
\|\omega^{1/\alpha}\|_{L_1(D)} &=&
\int_0^\infty\frac1{x^{1/\alpha}}\,
\exp\left(\frac{-(\ln x-\mu)^2}{2\,\alpha\,\sigma^2}
\right)\rd x\\
&=& s\,\int_0^\infty t^{s-s/\alpha-1}\,
\exp\left(\frac{-(\ln t^s-\mu)^2}{2\,\alpha\,\sigma^2}\right)\rd t\\
&=& s\,\int_0^\infty t\,\exp\left(\frac{-(\ln t-\mu/s)^2}{2\,
(\sigma\,\sqrt{\alpha}/s)^2}\right)
\rd t\\   &=&
 s\,\gamma\,\sqrt{2\,\pi}\,\int_0^\infty\frac{t^2}{t\,\gamma
\,\sqrt{2\,\pi}}
\,\exp\left(\frac{-(\ln t-\mu/s)^2}{2\,\gamma^2}\right)\rd t. 
\end{eqnarray*}
The last integral is the expected value of the square 
of a log-normal random variable ${\bf X}_{\gamma,\mu/s}$ with the
parameter $\sigma$ replaced by $\gamma$ and $\mu$ replaced by $\mu/s$.
Hence
$$
\|\omega^{1/\alpha}\|_{L_1(D)}^\alpha =
\frac{(s\,\gamma\,\sqrt{2\,\pi})^\alpha}{\sigma\,\sqrt{2\,\pi}}
\,
\exp\left(2\,\gamma^2\,\alpha+\frac{2\,\mu\,\alpha}{s}\right)
\, = \, \frac{(\sigma\,\,\sqrt{2\,\pi\,\alpha})^\alpha}{\sigma\,\sqrt{2\,\pi}}\,
\exp\left(\frac{\sigma^2\,(\alpha-1)^2}{2}+\mu\,(\alpha-1)\right).
$$

This gives us
\[
{\rm FCTR}(p,q,\omega,\kappa_{c_*})\,=\,
\left(\frac{c_*}{(c_*-\alpha)\,\sigma\,\sqrt{2\,\pi\,\alpha}}
\right)^\alpha\,\exp\left(\frac{\sigma^2\,((c_*-1)^2-(\alpha-1)^2)}2
\right)
\]
if either $\alpha\ge2$ or $\alpha<2$ and $2(2-\alpha)\le\alpha^2/\sigma^2$,
and
\[
{\rm FCTR}(p,q,\omega,\kappa_{2})\,=\,
\left(\frac{2}{(2-\alpha)\,\sigma\,\sqrt{2\,\pi\,\alpha}}
\right)^\alpha\,\exp\left(\frac{\sigma^2\,(1-(\alpha-1)^2)}2\right)
\]
if $\alpha<2$ and $2(2-\alpha)>\alpha^2/\sigma^2$.

Rounded values for FCTR for various $\sigma$ and $\alpha$ are\footnote{Computed with {\sc Mathematica}.}:
\[\begin{array}{c||c|c|c||c}
\sigma & 1 & 2 & 3 & \ \\
\hline
{\rm FCTR} & 1.315 & 2.948 & 23.941 & \alpha=1\\
\hline
{\rm FCTR} & 2.988 & 4.615 & 7.573 & \alpha=2
\end{array}
\]

\subsubsection{Case of $p > q$}

Now 
\[\calE^q_p(\omega,\kappa_c)\,=\,\frac1{\sigma\,\sqrt{2\,\pi}}\,
\left(\frac{c-\alpha}{c\,{\rm e}^{\mu}}\right)^{\alpha-r}\,
(I_1+I_2)^{\alpha-r},
\]
where 
\[I_1\,=\int_0^{{\rm e}^{\mu}} \exp\left(-\frac1{\alpha-r}\,\left[
  \frac{(\ln x-\mu)^2}{2\,\sigma^2}+\ln x\right]\right)\rd x
\]
and
\[I_2\,=\,\int_{{\rm e}^{\mu}}^\infty \exp\left(-\frac1{\alpha-r}\,\left[
  \frac{(\ln x-\mu)^2}{2\,\sigma^2}+\ln x\right]
-\frac{r\,c}{\alpha\,(\alpha-r)}\,(\mu-\ln x)\right)\rd x.
\]
In what follows, for both integrals, we will use first the change of
variables $y=\ln x-\mu$. We have
\begin{eqnarray*}
  I_1&=&\int_{-\infty}^0 \exp(y+\mu)\,\exp\left(-\frac1{\alpha-r}\,
  \left[\frac{y^2}{2\,\sigma^2}+y+\mu\right]\right)\rd x\\
  &=&\exp\left(\mu\,\frac{\alpha-r-1}{\alpha-r}\right)\,
  \int_{-\infty}^0 \exp\left(-\frac1{\alpha-r}\,\left[
    \frac{y^2}{2\,\sigma^2}+(1+r-\alpha)\,y\right]\right)\rd x\\
&=&\exp\left(\mu\,\frac{\alpha-r-1}{\alpha-r}\right)\,
  \int_{-\infty}^0 \exp\left(-\frac{
    y^2+2\,y\,\sigma^2\,(1+r-\alpha)}{2\sigma^2\,(\alpha-r)}\right)\rd x\\
&=&\exp\left(\frac{1+r-\alpha}{\alpha-r}\,\left(\frac{\sigma^2\,(1+r-\alpha)}2-\mu\right)\right)\,
 \int_{-\infty}^0\exp\left(-\frac{[y+\sigma^2\,(1+r-\alpha)]^2}
 {(\alpha-r)\,2\,\sigma^2}  \right)\rd y\\
&=&\exp\left(\frac{1+r-\alpha}{\alpha-r}\,\left(\frac{\sigma^2\,(1+r-\alpha)}2
-\mu\right)\right)
\sqrt{\frac{\sigma^2\,(\alpha-r)\,\pi}{2}}\,
\left[1+{\rm erf}\left(\frac{\sigma\,(1+r-\alpha)}{\sqrt{2\,(\alpha-r)}}\right)\right].
\end{eqnarray*}
Similarly for $I_2$ we get
\begin{eqnarray*}
I_2&=&\exp\left(\mu\,\frac{\alpha-r-1}{\alpha-r}\right)\,
\int_0^\infty\exp\left(-\frac1{\alpha-r}\left[\frac{y^2}{2\,\sigma^2}+y
  -y\,\left(\alpha-r+\frac{r\,c}\alpha\right)\right]\right) \rd y\\
&=&\frac{\exp\left(\frac{\sigma^2\,(1+r-\alpha-r\,c/\alpha)^2}
        {2\,(\alpha-r)}\right)}
{\exp\left(\frac{1+r-\alpha}{\alpha-r}\,\mu\right)}\,\int_0^\infty
\exp\left(-\frac{[y+\sigma^2\,(1+r-\alpha-r\,c/\alpha)]^2}{(\alpha-r)\,2\,
  \sigma^2}\right)\rd y\\
&=&\frac{\exp\left(\frac{\sigma^2\,(1+r-\alpha-r\,c/\alpha)^2}
        {2\,(\alpha-r)}\right)}
{\exp\left(\frac{1+r-\alpha}{\alpha-r}\,\mu\right)}\,
\sqrt{\frac{\sigma^2\,(\alpha-r)\pi}{2}}\,
\left[1-{\rm erf}\left(\frac{\sigma\,(1+r-\alpha-r\,c/\alpha)}
  {\sqrt{2\,(\alpha-r)}}\right)\right].
\end{eqnarray*}
Hence
\begin{eqnarray*}
\lefteqn{(I_1+I_2)^{\alpha-r}}\\
&=& \frac{\exp(\sigma^2\,(1+r-\alpha)^2/2)}
                            {\exp(\mu\,(1+r-\alpha))}  \,
\left[\frac{\sigma^2\,(\alpha-r)\,\pi}2\right]^{(\alpha-r)/2}\,
\left[1+{\rm erf}\left(\frac{\sigma\,(1+r-\alpha)}
                              {\sqrt{2\,(\alpha-r)}}\right)\right.\\
&&\left.+\exp\left(\frac{\sigma^2}{2\,(\alpha-r)}\,\left(
-\frac{2\,r\,c}\alpha\,(1+r-\alpha)+\left(\frac{r\,c}\alpha\right)^2\right)\right)
\,\left[1-{\rm erf}\left(\frac{\sigma\,(1+r-\alpha-r\,c/\alpha)}
           {\sqrt{2\,(\alpha-r)}}\right)\right]\right]^{\alpha-r}.
\end{eqnarray*}

Since computing ${\rm FCTR}(p,q,\omega,\kappa_c)$ for arbitrary
parameters $q\le p$ is very challenging, we will do this for 
$p=\infty$ and $q=1$, which---as already mentioned---corresponds to the
integration problem.
In this specific case, we have $\alpha=r+1$ and
\[
(I_1+I_2)^{\alpha-r}\,=\,\sqrt{\frac{\sigma^2\,\pi}2}\,
\left[1+\exp\left(\frac{(\sigma\, (\alpha-1) \,c)^2}{2\,\alpha^2}\right)\,
\left[1-{\rm erf}\left(-\frac{\sigma\,(\alpha-1)\,c}{\alpha\,\sqrt{2}}\right)\right]
 \right].
\]
This yields
\begin{eqnarray*}
{\rm FCTR}(\infty,1,\omega,\kappa_c)&=&
\frac{(c-\alpha)\,\sigma\,\sqrt{2\,\pi}}{2\,c}\,
\left(\frac{c}{(c-\alpha)\,\sigma\,\sqrt{2\,\pi\,\alpha}}\right)^\alpha\,
\exp\left(-\frac{\sigma^2\,(\alpha-1)^2}2 -\mu(\alpha-1)\right)\\
&&\qquad\qquad\times
\left[1+\exp\left(\frac{(\sigma\,(\alpha-1)\,c)^2}{2\,\alpha^2}\right)\,
\left[1-{\rm erf}\left(\frac{-\sigma\,(\alpha-1)\,c}{\alpha\,\sqrt{2}}\right)\right]
\right].                               
\end{eqnarray*}

As a numerical example we consider the case $\mu=0$ and $\sigma=1$. 
For fixed $\alpha \in \{1.5,2,2.5,3,3.5\}$ we numerically minimize\footnote{Using the {\sc Mathematica} command \texttt{FindMinimum}}  ${\rm FCTR}(\infty,1,\omega,\kappa_c)$ as a function in $c$. The results together with the optimal $c_*$ are presented in the following table:  
\[\begin{array}{c||c|c|c|c|c}
  \alpha & 1.5 & 2 & 2.5 & 3 & 3.5\\
  \hline
  {\rm FCTR}(\infty,1,\omega,\kappa_{c_*}) & 1.058 & 1.224 & 1.594 & 2.314 & 3.648\\
  c_* & 2.555 & 2.973 & 3.422 & 3.899 & 4.392
\end{array}
\]

\subsection{Logistic $\rho$ and Exponential $\psi$}
Consider $D=\bbR$, 
\[
\rho(x)\,=\frac{\exp(x/\nu)}{\nu\,(1+\exp(x/\nu))^2}\quad\mbox{and}\quad
\psi(x)\,=\, \exp(-b|x|)
\]
with parameters $\nu>0$ and $b>0$. Then 
\[
\omega(x)\,=\,\frac{\exp(x/\nu+b\,|x|)}{\nu\,(1+\exp(x/\nu))^2}
\]
which is quite complicated, in particular if one considers $\omega^{1/\alpha}$, and is not 
monotonic. 
Consider therefore 
\[
\kappa_a(x)\,=\,\exp(-a|x|)\quad\mbox{for some $a\,>\,0$}.
\]
Hence the points $x_{-n},\dots,x_{n}$ satisfying \eqref{condxigen}
are again given by \eqref{GG-quant}.

To simplify the formulas to come, we use
\[
\lambda\,:=\,\frac1\nu,\quad\mbox{i.e.,}\quad
\omega(x)\,=\,\frac{\lambda\,\exp(\lambda\,x+b\,|x|)}{(1+\exp(\lambda\,x))^2}.
\]
For $\|\omega^{1/\alpha}\|_{L_1(D)}^{\alpha}$ and $\|\omega/\kappa_a\|_{L_\infty(D)}$ 
to be finite, we need to have
\[
\lambda\,>\,b \quad\mbox{and}\quad \lambda\,\ge\,a+b.
\]

Since the integral in $\calE^q_p(\omega,\kappa_a)$ becomes very complicated for this example we 
do not distinguish between $p \le q$ and $p>q$. Instead we use the upper bound \eqref{fctr} here.

We first study $\|\omega/\kappa_a\|_{L_\infty(D)}$. 
Since $\omega$ and $\kappa_a$ are symmetric, we can restrict the
attention to $x\ge0$. 
By substituting $z=\exp(\lambda\,x)$, we get that
$$\left\|\frac{\omega}{\kappa_a}\right\|_{L_\infty(D)}=\lambda\sup_{z\ge1} \frac{z^{1+(a+b)/\lambda}}{(1+z)^2}.$$
When $a+b=\lambda$ the supremum is attained at $z=\infty$, otherwise
it is attained at $z=(\lambda+a+b)/(\lambda-(a+b))$.
Therefore
\[
\left\|\frac{\omega}{\kappa_a}\right\|_{L_\infty(D)}\,=\,\frac\lambda4\,
\left(1+\frac{a+b}\lambda\right)^{1+(a+b)/\lambda}\,
\left(1-\frac{a+b}\lambda\right)^{1-(a+b)/\lambda},
\]
with the convention that $0^0:=1$, i.e.,
$\|\omega/\kappa_a\|_{L_\infty(D)}=\lambda$ if $a=\lambda-b$.

Indeed, the previous formula for $\left\|\omega/\kappa_a\right\|_{L_\infty(D)}$ 
can be shown by noting that
\begin{eqnarray*}
 \lefteqn{\lambda\, \left[\frac{\lambda + a + b}{\lambda - a -b}\right]^{1+\frac{a+b}{\lambda}}\, 
 \left(1+\frac{\lambda + a + b}{\lambda - a -b}\right)^{-2}
 =\lambda \, \left[\frac{\lambda + a + b}{\lambda - a -b}\right]^{1+\frac{a+b}{\lambda}}\, 
 \left(\frac{\lambda-(a+b)}{2\lambda}\right)^2}\\
 &=&\frac{\lambda}{4}\, \left[\frac{\lambda + a + b}{\lambda - a -b}\right]^{1+\frac{a+b}{\lambda}}
 \left(1-\frac{a+b}{\lambda}\right)^2\\
 &=&\frac{\lambda}{4}\, \left[\frac{\lambda + a + b}{\lambda - a -b}\right]^{1+\frac{a+b}{\lambda}}
 \left(1-\frac{a+b}{\lambda}\right)^{1-\frac{a+b}{\lambda}}\, \left(1-\frac{a+b}{\lambda}\right)^{1+\frac{a+b}{\lambda}}\\
 &=&\frac{\lambda}{4}\, \left(1-\frac{a+b}{\lambda}\right)^{1-\frac{a+b}{\lambda}}\, 
 \left(\frac{\lambda + a + b}{\lambda - a -b}\cdot \frac{\lambda-a-b}{\lambda}\right)^{1+\frac{a+b}{\lambda}}.
\end{eqnarray*}

As above,
\[
 \|\kappa_a^{1/\alpha}\|_{L_1(D)}^\alpha\,=\,
\left(\frac{2\alpha}{a}\right)^\alpha. 
\]
We also have
\begin{eqnarray*}
\|\omega^{1/\alpha}\|_{L_1(D)}^\alpha&=&\lambda\,
  \left(2\,\int_0^\infty\frac{\exp((\lambda+b)\,x/\alpha)}
       {(1+\exp(\lambda\,x))^{2/\alpha}}\, \rd x\right)^\alpha\\
&\ge&\lambda\,\left(2\int_0^\infty
\frac{\exp(\lambda\,x/\alpha)}
{(1+\exp(\lambda\,x/\alpha))^2}\, \rd
    x\right)^\alpha
\end{eqnarray*}
due to the fact that $1/(1+A)^{1/\alpha}\ge 1/(1+A^{1/\alpha})$ since
$\alpha\ge1$.
Therefore
\[
\|\omega^{1/\alpha}\|_{L_1(D)}^\alpha\,\ge\,\lambda\,
\left(\frac\alpha\lambda\right)^\alpha.
\]
This gives 
\[
  {\rm FCTR}(p,q,\omega,\kappa_a)\,\le\ 
  \left(\frac{2\,\lambda}{a}\right)^\alpha\,\frac14\,
  \left(1+\frac{a+b}\lambda\right)^{1+(a+b)/\lambda}\,
  \left(1-\frac{a+b}\lambda\right)^{1-(a+b)/\lambda}.
\]
As before the right-hand side above is
\[
  \left(\frac{2\,\lambda}{\lambda-b}\right)^\alpha\quad\mbox{if\ }
  a\,=\,\lambda-b.
\]

Letting $x=a/\lambda$, the minimum is at $0 < x < 1-b/\lambda$ that is
the root of
\[
x\,\left(\ln\left(1+\frac{b}{\lambda}+x\right)-\ln\left(1-\frac{b}{\lambda}-x\right)\right)-\alpha\,=\,0. 
\]

Rounded values of the upper bound on FCTR for $\alpha=b=1$ and various $\lambda$'s are\footnote{Computed with {\sc Mathematica}.}:
$$
\begin{array}{r||c|c|c|c}
\lambda & 2 & 5 & 10 & 15\\
\hline
{\mbox{Bound on FCTR}} & 3.341 & 1.710 & 1.431 & 1.353
\end{array}
$$

\subsection{Student's $\rho$ and $\psi$}
Consider Student's $t$-distribution on $D=\bbR$ 
\[
\rho(x)\,=\,T_\nu\,\left(1+\frac{x^2}\nu\right)^{-(\nu+1)/2}\quad
\mbox{with}\quad T_\nu\,=\,\frac{\Gamma((\nu+1)/2)}{\sqrt{\nu\,\pi}\,
  \Gamma(\nu/2)}\quad\mbox{for}\quad\nu\,>\,0.
\]
Here $\Gamma$ denotes Euler's Gamma function $\Gamma(z)=\int_0^{\infty} t^{z-1} {\rm e}^{-t}\, \rd t$. Let 
\[\psi(x)\,=\,\left(1+\frac{x^2}\nu\right)^{-b/2}\quad\mbox{and}\quad
\kappa_a(x)\,=\,(1+|x|)^{-a}
\]
for $a>0$ and $b\ge 0$. For $\|\omega^{1/\alpha}\|_{L_1(D)}$,
$\|\kappa_a^{1/\alpha}\|_{L_1(D)}$, and
$\|\omega/\kappa_a\|_{L_\infty(D)}$ to be finite, we have to assume that
\[\nu+1-b\,\ge\,a\,>\,\alpha. 
\]
It is easy to see that
\[\|\kappa_a^{1/\alpha}\|_{L_1(D)}^{\alpha}\,=\,\left(\frac{2\,\alpha}{a-\alpha}
\right)^\alpha.
\]
Hence the points $x_{-n},\dots,x_{n}$ satisfying \eqref{condxigen}
are given by
\begin{equation*}
    x_{i}\,=\,-x_{-i}\,=\, \left(1-\frac{i}{n}\right)^{-\frac{\alpha}{a-\alpha}} -1 \quad\mbox{for\ } 0 \le i \le n.
\end{equation*}

To compute the norm of $\omega^{1/\alpha}$, 
make the change of variables $x/\sqrt{\nu}=t/\sqrt{\mu}$, where
\[\mu\,=\,\frac{\nu+1-b-\alpha}\alpha
  \quad\mbox{so that}\quad
  \frac{\mu+1}2\,=\,\frac{\nu+1-b}{2\,\alpha}.
\]
Then we get 
\begin{eqnarray*}
\|\omega^{1/\alpha}\|_{L_1(D)}^\alpha&=&T_\nu\,\left(\int_\bbR\left(1+
\frac{x^2}\nu\right)^{-(\nu+1-b)/(2\,\alpha)}\rd x\right)^\alpha\\
&=&T_\nu\left(\frac\nu\mu\right)^{\alpha/2} T_\mu^{-\alpha}\,
\left(T_\mu\int_\bbR\left(1+\frac{t^2}\mu\right)^{-(\mu+1)/2}\rd t
\right)^\alpha\,=\,T_\nu\,\left(\frac{\sqrt{\nu}}{T_\mu\,\sqrt{\mu}}
\right)^\alpha.
\end{eqnarray*}
Since
\[\frac{\omega(x)}{\kappa_a(x)}\,=\,
T_\nu\,\left(1+\frac{x^2}\nu\right)^{-(\nu+1-b)/2}\,
(1+|x|)^a,
\]
we have 
\[
\left\|\frac\omega{\kappa_a}\right\|_{L_\infty(D)}\,=\,
T_\nu\,(1+\nu)^{(\nu+1-b)/2} \quad\mbox{for\ }a\,=\,\nu+1-b,
\]
and
\[\left\|\frac\omega{\kappa_a}\right\|_{L_\infty(D)}\,=\,
\frac{\omega(x_*)}{\kappa(x_*)}\quad\mbox{for}\quad
x_*\,=\,\frac{\sqrt{(\nu+1-b)^2 +4\,a\,\nu\,(\nu+1-b-a)}
  -(\nu+1-b)}{2\,(\nu+1-a-b)}
\]
for $a<\nu+1-b$. 

This gives
\begin{eqnarray*}
{\rm FCTR}(p,q,\omega,\kappa_a)\,\le\,\left\{\begin{array}{ll}
(1+\nu)^{(\nu+1-b)/2}\,\left(\frac{2\,T_\mu}{\sqrt{\nu\,\mu}}\right)^\alpha
& \mbox{for $a=\nu+1-b$},\\[0.5em]
\frac{(1+x_*)^a}{\left(1+\frac{x_*^2}{\nu}\right)^{(\nu+1-b)/2}}\,
\left(T_\mu\,\frac{2\,\alpha}{a-\alpha}\,\sqrt{\frac\mu\nu}\right)^\alpha
& \mbox{for $a\in(\alpha,\nu+1-b)$},\end{array}\right. 
\end{eqnarray*}
with equality whenever $p \le q$.

In the following numerical experiments for fixed values of $\alpha$, $b$ and $\nu$, 
we choose $a \in (\alpha,\nu+1-b]$ of the form $a=\alpha+k/10$ such that it gives the smallest value of the above bound on FCTR. For example: 
$$
\begin{array}{c||c|c|c|c}
(\nu,b,\alpha) & (3,2,1) & (4,2,2) & (5,3,2) & (6,3,3)\\
\hline
{\rm FCTR} & 1.427 & 1.626 & 1.710 & 1.861
\end{array}
$$

\begin{small}
\noindent\textbf{Authors' addresses:}

\medskip
\noindent Peter Kritzer\\
Johann Radon Institute for Computational and Applied Mathematics (RICAM)\\
Austrian Academy of Sciences\\
Altenbergerstr.~69, 4040 Linz, Austria\\
E-mail: \texttt{peter.kritzer@oeaw.ac.at}

\medskip

\noindent Friedrich Pillichshammer\\
Institut f\"{u}r Finanzmathematik und Angewandte Zahlentheorie\\
Johannes Kepler Universit\"{a}t Linz\\
Altenbergerstr.~69, 4040 Linz, Austria\\
E-mail: \texttt{friedrich.pillichshammer@jku.at}

\medskip

\noindent Leszek Plaskota\\
Institute of Applied Mathematics and Mechanics\\
Faculty of Mathematics, Informatics, and Mechanics\\
University of Warsaw\\
S. Banacha 2, 02-097 Warsaw, Poland\\
E-mail: \texttt{leszekp@mimuw.edu.pl} 

\medskip

\noindent G. W. Wasilkowski\\
Computer Science Department, University of Kentucky\\
301 David Marksbury Building\\
329 Rose Street\\
Lexington, KY 40506, USA\\
E-mail: \texttt{greg@cs.uky.edu}
\end{small}

\end{document}